\newtheorem{theorem}{Theorem}[section]
\newtheorem{corollary}[theorem]{Corollary}
\newtheorem{lemma}[theorem]{Lemma}
\newtheorem{proposition}[theorem]{Proposition}
\theoremstyle{definition}
\newtheorem{definition}[theorem]{Definition}
\newtheorem{remark}[theorem]{Remark}
\newtheorem{example}[theorem]{Example}
\newtheorem{question}[theorem]{Question}
\DeclareMathOperator{\Ima}{im \ }
 \newcommand{\Ker}{{\rm ker \ }}
  \newcommand{\Coker}{{\rm coker \ }}
\newcommand{\Hom}{{\rm Hom}}
 \newcommand{\Map}{{\rm Map}}
\newcommand{\rk}{{\rm rk}}
\newcommand{\mrk}{{\rm mrk}}
\newcommand{\mrks}{{\rm mrk_\Sigma}}
\newcommand{\vrk}{{\rm vrk}}
\newcommand{\mdim}{{\rm mdim}}
\newcommand{\mdims}{{\rm mdim_{\Sigma}}}
\newcommand{\rL}{{\rm L}}
\newcommand{\Sym}{{\rm Sym}}
\newcommand{\tr}{{\rm tr}}
  \newcommand{\cC}{{\mathcal C}}
  \newcommand{\cD}{{\mathcal D}}
  \newcommand{\cF}{{\mathcal F}}
  \newcommand{\cJ}{{\mathcal J}}
  \newcommand{\cL}{{\mathcal L}}
 \newcommand{\cM}{{\mathcal M}}
 \newcommand{\cN}{{\mathcal N}}
 \newcommand {\cU}{{\mathcal U}}
  \newcommand {\cV}{{\mathcal V}}
  \newcommand{\bC}{{\mathbb C}}
  \newcommand{\bF}{{\mathbb F}}
 \newcommand{\bN}{{\mathbb N}}
 \newcommand{\bP}{{\mathbb P}}
 \newcommand{\bR}{{\mathbb R}}
 \newcommand{\bZ}{{\mathbb Z}}
  \newcommand{\CG}{{\mathbb C \Gamma}}
 \newcommand{\ZG}{{\mathbb Z \Gamma}}
\newcommand{\LG}{{\mathcal L \Gamma }}
 \newcommand{\hcM}{\widehat{\mathcal M}}
 \newcommand{\sA}{{\mathscr A}}
\newcommand{\sB}{{\mathscr B}}
\newcommand{\sF}{{\mathscr F}}
\newcommand{\sM}{{\mathscr M}}
\begin{document}

\title{Dynamical correspondences of  $L^2$-Betti numbers}

\author{Bingbing Liang}

\address{\hskip-\parindent
B.L., Max Planck Institute for Mathematics, Vivatsgasse 7, 53111, Bonn, Germany }
\email{bliang@mpim-bonn.mpg.de}

\subjclass[2010]{Primary 37B99, 16D10, 55N35, 22D25.}
\keywords{Sofic group, mean dimension, mean rank, $L^2$-Betti number, dimension-flatness}

\date{March 1, 2017}

\begin{abstract}
We investigate dynamical analogues of the $L^2$-Betti numbers for modules over integral group ring of a discrete sofic group.  In particular, we show that the $L^2$-Betti numbers exactly measure the failure of addition formula for dynamical invariants.
\end{abstract}

\maketitle

\tableofcontents

\section{Introduction} \label{S-introduction}

There are a couple of connections established among invariants in dynamical systems, group rings, and $L^2$-invariants. These connections are obtained via a type of dynamical system called algebraic actions. Given a discrete group $\Gamma$, each $\ZG$-module $\cM$ can be treated as an action of $\Gamma$ on the discrete abelian group $\cM$ by group automorphisms. The Pontryagin dual $\hcM$ of $\cM$ naturally inherits an action of $\Gamma$  by continuous automorphisms from the module structure of $\cM$. Conversely, by Pontryagin duality, each action of $\Gamma$ on a compact Hausdorff abelian group arise this way and thus we call such a dynamical system an {\it ``algebraic action"} \cite{Schmidt95}.

A surprising fact is that one can recover certain algebraic information about $\cM$ by taking advantage of purely dynamical information about $\Gamma \curvearrowright \hcM$. However, the dynamical information itself does not use the algebraic structure of $\hcM$. For example, Li and Thom showed that, in the setting of amenable group actions, the entropy of $\Gamma \curvearrowright \hcM$ coincides with the $L^2$-torsion of $\cM$ (see \cite{LT14}). One ingredient of establishing this connection is Peters' algebraic characterization of entropy \cite{Peters79}. This correspondence has interesting applications to the vanishing results on $L^2$-torsion and Euler characteristic \cite{LT14, CT15}. In the same spirit, Li and the author showed that the mean topological dimension of $\Gamma \curvearrowright \hcM$ coincides with the von Neumann-L\"{u}ck rank of $\cM$ (see \cite{LL13}). Establishing this correspondence relies on the study of the mean rank as an algebraic invariant of $\ZG$-modules and L\"{u}ck's result on dimension-flatness for amenable groups \cite[Theorem 6.73]{Lueck02B}. 
Based on this connection, the mean dimension of algebraic actions for amenable groups is well understood \cite{LL13}.

Mean topological dimension is a newly-introduced dynamical invariant by Gromov \cite{Gromov99M}, systematically studied by Lindenstrauss and Weiss \cite{LW00}, and remains to be further explored \cite{Coornaert15B}. As a dynamical analogue of the covering dimension, it is closely related to the topological entropy, and takes a crucial role in embedding problem of dynamical systems \cite{Gutman11, Gutman15, Gutman16, GLT16, GT14, GT15A, LW00}.

On the other hand, using L\"{u}ck's extended von Neumann dimension for any module over the group von Neumann algebra $\LG$ of a discrete group $\Gamma$ (see \cite[Chapter 6]{Lueck02B}), for any $\ZG$-module $\cM$ we call the von Neumann-L\"{u}ck dimension for $\LG \otimes_\ZG \cM$ as the {\it von Neumann-L\"{u}ck rank}  $\vrk(\cM)$ of $\cM$. Von Neumann-L\"{u}ck dimension is a length function on $\LG$-modules \cite[Theorem 6.7]{Lueck02B} and von Neumann-L\"{u}ck rank is a length function on $\ZG$-modules when $\Gamma$ is amenable \cite[Definition 2.1]{LL15A} \cite[Section 5.2]{LL13} \cite[Theorem 3.3.4]{Liang16D}.

Mean rank is also a length function on $\ZG$-modules of an amenable group $\Gamma$ (see \cite[Section 3]{LL13}). As a dynamical analogue of the rank of abelian groups, it serves as a bridge connecting mean dimension and von Neumann-L\"{u}ck rank \cite[Theorem 1.1]{LL13}.

Towards more general groups, Bowen and Kerr-Li developed an entropy theory based on the idea of approximating the dynamical data by external finite models when the acting group can be approximated by finite groups \cite{Bowen10, KL11}. The groups admitting this approximation are the so-called {\it sofic groups}, which include residually finite groups and amenable groups \cite{Gromov99S, Weiss99}. The extended notion of entropy extends the classic notion but no longer decreases when passing to a factor system. Similarly mean dimension has been extended to the case of sofic group actions \cite{Li13}. To deal with this nonamenable phenomenon, Li and the author introduced the relative sofic invariants, established an alternative addition formula, and used them to relate mean dimension with von Neumann-L\"{u}ck rank for sofic groups \cite{LL15A}. Similar approaches also independently appears in the works of other experts. Hayes gave a formula for this invariant in terms of a given compact model in \cite{Hayes15}. A similar notion for Rokhlin entropy, called outer Rokhlin entropy, was developed by Seward in  \cite{SewardII}. Using the microstate technique, Hayes proved that von Neumann-L\"{u}ck rank of a finitely presented $\ZG$-module $\cM$ coincides with sofic mean dimension of $\Gamma \curvearrowright \hcM$ under certain conditions \cite{Hayes13A}. 

Via a projective resolution of any $\ZG$-module $\cM$, we can treat von Neumann-L\"{u}ck rank of $\cM$ as the $0$-th $L^2$-Betti number $\beta_0^{(2)}(\cM)$ of $\cM$ (see Proposition \ref{0th case}). From \cite[Theorem 1.3]{LL15A}, we know the sofic mean dimension $\mdims(\hcM)$ of $\Gamma \curvearrowright \hcM$ correspondences to $\beta_0^{(2)}(\cM)$ when $\Gamma$ is a countable sofic group and $\cM$ is countable. Here $\Sigma$ is a fixed sofic approximation sequence for $\Gamma$.  For the higher $L^2$-Betti numbers of $\cM$, Hanfeng Li asked the following question.

\begin{question} \label{question}
If $\Gamma$ is sofic, what dynamical invariants of $\Gamma \curvearrowright \hcM$ correspond to the $j$-th $L^2$-Betti numbers $\beta_j^{(2)}(\cM)$ of $\cM$ for $j \geq 1$?
\end{question}

In this paper, motivated by the above question, we mainly study dynamical analogues of the $L^2$-Betti numbers $\beta_j^{(2)}(\cC_\ast)$ of a chain complex $\cC_\ast$ of $\ZG$-modules:
$$\cdots \stackrel{\partial_2}{\to} C_1 \stackrel{\partial_1}{\to} C_0 \to 0(=C_{-1}).$$
In the spirit of Elek \cite{Elek02} (also for the notational convenience), we introduce the $j$-th mean rank  $\mrk_j(\cC_\ast)$ of $\cC_\ast$ and the $j$-th mean dimension $\mdim_j(\cC_\ast)$ of $\widehat{\cC_\ast}:=\Hom_\bZ(\cC_\ast, \bR/\bZ)$ for any sofic group $\Gamma$ (see Definition \ref{mean rank for chain complex}). These definitions use the relative sofic invariants as opposed to Elek's approach where he considered the case that  $\Gamma$ is amenable and therefore there is no nonamenable phenomenon appeared.

Let $\Gamma \curvearrowright X$ and $\Gamma \curvearrowright Y$ be two algebraic actions, $X$ and $Y$ be metrizable spaces, and $\pi: X \to Y$ be a $\Gamma$-equivariant continuous homomorphism.  We say $\pi$  satisfies {\it Juzvinski\u{\i} formula for mean dimension} if $\mdims(X)=\mdims(\Ker \pi)+\mdims(\Ima \pi)$. The main result of this paper is as follows.
\begin{theorem} \label{theorem for correspondence}
Suppose that  $\vrk(C_j) <\infty$ for some $j\geq 0$.  Then
\begin{enumerate}
\item 
$\beta_j^{(2)}(\cC_\ast)=\vrk(\Coker \partial_{j+1})-\vrk(\Ima \partial_j|C_{j-1});$
\item 
If  $\Gamma$ is sofic, we have
$\beta_j^{(2)}(\cC_\ast)=\mrk_j(\cC_\ast)$.
If furthermore $C_j$ and $C_{j-1}$ are countable, we have
$\mrk_j(\cC_\ast)=\mdim_j(\widehat{\cC_\ast})$;
\item 
If $\Ima \partial_{j+1}=\Ker \partial_j$, we have that $\beta_j^{(2)}(\cC_\ast)=0$ if and only if
$$\vrk(C_{j-1})=\vrk(\Coker \partial_j)+\vrk(\Ima \partial_j).$$
If furthermore $\Gamma$ is sofic and $C_j$ and $C_{j-1}$ are countable, we have that $\beta_j^{(2)}(\cC_\ast)=0$ if and only if $\widehat{\partial_j}$ satisfies \it Juzvinski\u{\i} formula for mean dimension.
\end{enumerate}
\end{theorem}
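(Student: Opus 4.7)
My plan is to deduce all three parts from the ``master identity''
$$\beta_j^{(2)}(\cC_\ast) = \vrk(\Coker \partial_{j+1}) - \vrk(\Ima \partial_j|C_{j-1})$$
of part~(1), and then transport it along the correspondence between $\vrk$ and the sofic dynamical invariants $\mrks,\mdims$ furnished by \cite[Theorem~1.3]{LL15A}. For part~(1) I would compute $\beta_j^{(2)}(\cC_\ast)=\dim_\LG H_j(\LG\otimes_\ZG\cC_\ast)$ by repeated additivity of $\dim_\LG$, using that it is a length function on $\LG$-modules (\cite[Theorem~6.7]{Lueck02B}). Writing $\partial_\ast^{(2)}:=\id_\LG\otimes\partial_\ast$, the three short exact sequences
$$0\to \Ker\partial_j^{(2)} \to \LG\otimes_\ZG C_j \to \Ima\partial_j^{(2)} \to 0,$$
$$0\to \Ima\partial_{j+1}^{(2)} \to \Ker\partial_j^{(2)} \to H_j(\LG\otimes_\ZG\cC_\ast) \to 0,$$
and $0\to \Ima\partial_{j+1}^{(2)} \to \LG\otimes_\ZG C_j \to \Coker\partial_{j+1}^{(2)} \to 0$ combine, after cancelling the common $\vrk(C_j)$, into the master identity; the hypothesis $\vrk(C_j)<\infty$ is exactly what legitimises that cancellation. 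Here $\Coker\partial_{j+1}^{(2)}\cong \LG\otimes_\ZG\Coker\partial_{j+1}$ by right-exactness of $\LG\otimes_\ZG-$, and $\vrk(\Ima\partial_j|C_{j-1})$ is by definition $\dim_\LG(\Ima\partial_j^{(2)})$.

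For part~(2), I would appeal to Definition~\ref{mean rank for chain complex}: in the spirit of Elek but built from the relative sofic invariants of \cite{LL15A}, $\mrk_j(\cC_\ast)$ is set up so as to split as $\mrks(\Coker\partial_{j+1})-\mrks(\Ima\partial_j|C_{j-1})$. Applying the correspondence $\mrks(\cM)=\vrk(\cM)$ (and its relative analogue) termwise then matches the master identity and yields $\mrk_j(\cC_\ast)=\beta_j^{(2)}(\cC_\ast)$. The second equality $\mrk_j(\cC_\ast)=\mdim_j(\widehat{\cC_\ast})$ I would deduce analogously, by Pontryagin-dualising the constituent modules and invoking the countable-case identity $\mrks(\cM)=\mdims(\widehat{\cM})$ from the same theorem.

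For part~(3), the exactness assumption $\Ima\partial_{j+1}=\Ker\partial_j$ yields a natural $\ZG$-isomorphism $\Coker\partial_{j+1}\cong\Ima\partial_j$, so the master identity becomes $\beta_j^{(2)}(\cC_\ast)=\vrk(\Ima\partial_j)-\vrk(\Ima\partial_j|C_{j-1})$. Separately, applying $\LG\otimes_\ZG-$ to $0\to\Ima\partial_j\to C_{j-1}\to\Coker\partial_j\to 0$ produces the exact sequence of $\LG$-modules $0\to\Ima\partial_j^{(2)}\to\LG\otimes_\ZG C_{j-1}\to\LG\otimes_\ZG\Coker\partial_j\to 0$, whose dimension additivity gives the unconditional identity $\vrk(C_{j-1})=\vrk(\Ima\partial_j|C_{j-1})+\vrk(\Coker\partial_j)$. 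Comparing, $\beta_j^{(2)}(\cC_\ast)=0$ is equivalent to $\vrk(C_{j-1})=\vrk(\Coker\partial_j)+\vrk(\Ima\partial_j)$. Under countability, Pontryagin-dualising $\partial_j$ to $\widehat{\partial_j}:\widehat{C_{j-1}}\to\widehat{C_j}$ yields $\Ker\widehat{\partial_j}\cong\widehat{\Coker\partial_j}$ and $\Ima\widehat{\partial_j}\cong\widehat{\Ima\partial_j}$, so the correspondence $\mdims(\widehat{\cM})=\vrk(\cM)$ translates Juzvinski\u{\i}'s formula for $\widehat{\partial_j}$ into exactly the displayed algebraic identity.

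The main obstacle sits in part~(2). In the sofic non-amenable regime mean rank fails to be additive on short exact sequences, so the definition of $\mrk_j(\cC_\ast)$ cannot be the naive mean rank of $H_j$; it must instead be engineered from the relative sofic invariants of \cite{LL15A} so that their addition formulae faithfully reproduce the algebraic difference $\vrk(\Coker\partial_{j+1})-\vrk(\Ima\partial_j|C_{j-1})$ of the master identity. Establishing this splitting for chain complexes, and controlling its behaviour under the intervening long exact sequences, is where the bulk of the technical work must go.
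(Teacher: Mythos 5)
Your proposal is correct and follows essentially the same route as the paper: part (1) is obtained by the same additivity computation for $\dim_\LG$ applied to the short exact sequences around $\LG\otimes_\ZG C_j$ (with $\vrk(C_j)<\infty$ justifying the cancellation), and parts (2) and (3) are transported from this identity exactly as in the paper via Theorem \ref{addition formula for mrk} and the Pontryagin-duality identifications $\Ker \widehat{\partial_{j+1}}\cong\widehat{\Coker\partial_{j+1}}$ and $\mdims(\Ima \widehat{\partial_j}|\widehat{C_{j-1}})=\mdims(\widehat{\Ima \partial_j}|\widehat{C_{j-1}})$. The only inessential difference is your closing worry about part (2): since Definition \ref{mean rank for chain complex} already defines $\mrk_j(\cC_\ast)$ as the difference $\mrks(\Coker\partial_{j+1})-\mrks(\Ima\partial_j|C_{j-1})$ and the relative correspondence is imported wholesale from \cite{LL15A}, no further splitting argument is needed there.
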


From Theorem \ref{theorem for correspondence}, the notion of $j$-th mean rank provides an equivalent algebraic definition of $L^2$-Betti numbers from module theory. Secondly, the $L^2$-Betti numbers exactly measure the failure of the  additivity of dynamical invariants. In \cite{Elek02}, Elek introduced an analogue of the $L^2$-Betti numbers for amenable linear subshifts. It was showed that  Juzvinski\u{\i} formula for entropy  can fail when the group $\Gamma$ has nonzero Euler characteristic \cite{Elek99}. Hayes proved that Juzvinski\u{\i} formula for entropy fails when $\Gamma$ has nonzero $L^2$-torsion \cite{Hayes16}. Gaboriau and Seward established some inequalities relating Juzvinski\u{\i} formula for entropy with $L^2$-Betti numbers \cite{GS15A}. Bowen and Gutman established Juzvinski\u{\i} formula for the  $f$-invariant of finitely generated free group actions \cite{BG14}.

To respond to Question \ref{question}, we introduce the $j$-th mean dimension $\mdim_j(\hcM)$ of $\Gamma \curvearrowright \hcM$ and $j$-th mean rank $\mrk_j(\cM)$ of $\cM$ (Definition \ref{j-th mdim} and Definition \ref{mean rank for chain complex}). As the first application, the following corollary may shed some light on Question \ref{question}.

\begin{corollary} \label{main theorem}
When $\mrk_j(\cM)$ is defined, 
we have $\mrk_j(\cM)=\beta_j^{(2)}(\cM)$.
If furthermore $\cM$ is countable, we have $\mdim_j(\widehat{\cM})=\beta_j^{(2)}(\cM).$
\end{corollary}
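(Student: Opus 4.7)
The plan is to reduce Corollary \ref{main theorem} directly to Theorem \ref{theorem for correspondence} by applying it to a suitable free resolution of $\cM$, and to use the standard resolution-independence of $L^2$-Betti numbers to connect the two setups.

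First I would fix a free resolution $\cC_\ast \to \cM \to 0$ of $\cM$ as a $\ZG$-module. By Definition \ref{mean rank for chain complex}, $\mrk_j(\cM)$ should be set to equal $\mrk_j(\cC_\ast)$ for such a resolution, and the phrase ``when $\mrk_j(\cM)$ is defined'' translates into the hypothesis $\vrk(C_j) < \infty$ required in Theorem \ref{theorem for correspondence}. In the countable case, I would choose the resolution so that each $C_k$ is a countable free $\ZG$-module; this is possible because $\ZG$ is countable and each successive kernel (syzygy) of a countable module over a countable ring is again countable, so the standard inductive construction produces a resolution by countable frees. Then $\mdim_j(\widehat{\cM})$ is defined as $\mdim_j(\widehat{\cC_\ast})$ via this resolution.

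Second, the classical definition of $L^2$-Betti numbers via L\"uck's dimension function (see Proposition \ref{0th case} and the discussion before it) gives
\[
\beta_j^{(2)}(\cM) \;=\; \beta_j^{(2)}(\cC_\ast)
\]
for any projective resolution $\cC_\ast$ of $\cM$, independently of the choice. Now I apply Theorem \ref{theorem for correspondence}(2) to the chain complex $\cC_\ast$: the hypothesis $\vrk(C_j)<\infty$ is in force, so
\[
\mrk_j(\cC_\ast) \;=\; \beta_j^{(2)}(\cC_\ast) \;=\; \beta_j^{(2)}(\cM),
\]
and if the $C_j, C_{j-1}$ are countable, the same theorem gives $\mdim_j(\widehat{\cC_\ast})=\mrk_j(\cC_\ast)$. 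Combining these equalities yields both assertions of the corollary.

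The only conceptual point to be checked is that $\mrk_j(\cM)$ and $\mdim_j(\widehat{\cM})$ are well-defined, i.e.\ independent of the chosen resolution. I do not expect this to be a genuine obstacle: once the equality with $\beta_j^{(2)}(\cM)$ has been established for \emph{one} resolution via Theorem \ref{theorem for correspondence}, the resolution-invariance of $\beta_j^{(2)}(\cM)$ forces resolution-invariance of $\mrk_j(\cC_\ast)$ and $\mdim_j(\widehat{\cC_\ast})$ a posteriori. So the main substantive content is already packaged inside Theorem \ref{theorem for correspondence}, and the proof of the corollary is essentially the bookkeeping described above.
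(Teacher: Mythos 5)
Your proposal is correct and follows exactly the paper's route: the paper's entire proof is the one-line observation that the corollary follows by applying Theorem \ref{theorem for correspondence} to a deleted projective resolution of $\cM$, with resolution-independence of $\beta_j^{(2)}(\cM)$ handled by the Comparison Theorem as noted in the remark after Definition \ref{j-th mdim}. Your additional bookkeeping (countable free resolutions, a posteriori well-definedness of $\mrk_j$ and $\mdim_j$) is sound and merely makes explicit what the paper leaves implicit.
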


For the second application, we give a dynamical characterization of L\"{u}ck's dimension-flatness. We say $\Gamma$ satisfies {\it L\"{u}ck's dimension-flatness over $\bZ$} if $\beta_j^{(2)}(\cM)$ vanishes for any $j \geq 1$ and $\ZG$-module $\cM$. It was proven that amenable groups satisfy L\"{u}ck's dimension-flatness \cite[Theorem 6.37]{Lueck02B}. We say $\Gamma$ satisfies {\it Juzvinski\u{\i} formula for  von Neumann-L\"{u}ck rank} if $\vrk(\cM)=\vrk(\Ker \varphi)+\vrk(\Ima \varphi)$ for any $\ZG$-module homomorphism $\varphi: \cM \to \cN$ of $\ZG$-modules $\cM$ and $\cN$. It is similarly defined when we talk about whether $\Gamma$ satisfies Juzvinski\u{\i} formula for mean rank.

\begin{corollary} \label{dimension-flat}
$\Gamma$ satisfies {\it L\"{u}ck's dimension-flatness over $\bZ$} if and only if $\Gamma$ satisfies  Juzvinski\u{\i} formula for von Neumann-L\"{u}ck rank.  If $\Gamma$ is sofic, then $\Gamma$ satisfies {\it L\"{u}ck's dimension-flatness over $\bZ$} if and only if $\Gamma$ satisfies  Juzvinski\u{\i} formula for mean rank and mean dimension.
\end{corollary}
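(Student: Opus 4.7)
The overall plan is to reduce both equivalences to the basic fact that $\vrk$ is additive on every short exact sequence of $\ZG$-modules if and only if $\mathrm{Tor}^{\ZG}_j(\LG, -)$ vanishes dimensionally for all $j \geq 1$. This is set up by combining the long exact sequence in $\mathrm{Tor}^{\ZG}(\LG, -)$ with the additivity of $\dim_{\LG}$ on short exact sequences of $\LG$-modules.

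For the first equivalence, the direction ``dimension-flatness $\Rightarrow$ Juzvinski\u{\i} for $\vrk$'' is immediate: apply $\LG \otimes_{\ZG} -$ to $0 \to \Ker \varphi \to \cM \to \Ima \varphi \to 0$; the higher $\mathrm{Tor}$ terms have zero dimension, and additivity of $\dim_{\LG}$ on the resulting 4-term sequence yields $\vrk(\cM) = \vrk(\Ker \varphi) + \vrk(\Ima \varphi)$. Conversely, Juzvinski\u{\i} for $\vrk$ applied to surjections is equivalent to additivity of $\vrk$ on every short exact sequence of $\ZG$-modules. Given $\cM$, take a free resolution $\cdots \to P_1 \to P_0 \to \cM \to 0$, let $K = \Ker(P_0 \to \cM)$, and use the 4-term exact sequence
\[0 \to \mathrm{Tor}_1^{\ZG}(\LG, \cM) \to \LG \otimes_{\ZG} K \to \LG \otimes_{\ZG} P_0 \to \LG \otimes_{\ZG} \cM \to 0\]
to read off $\dim_{\LG} \mathrm{Tor}_1^{\ZG}(\LG, \cM) = \vrk(K) - \vrk(P_0) + \vrk(\cM) = 0$. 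Dimension-shifting $\mathrm{Tor}_j^{\ZG}(\LG, \cM) \cong \mathrm{Tor}_{j-1}^{\ZG}(\LG, K)$ for $j \geq 2$, combined with induction on $j$, finishes off the vanishing of all $\beta_j^{(2)}(\cM)$ for $j \geq 1$.

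For the second equivalence, invoke Corollary \ref{main theorem} at $j = 0$: in the sofic setting $\mrk(\cM) = \beta_0^{(2)}(\cM) = \vrk(\cM)$, and for countable $\cM$ also $\mdims(\hcM) = \vrk(\cM)$. Hence Juzvinski\u{\i} for mean rank is literally Juzvinski\u{\i} for $\vrk$. For Juzvinski\u{\i} for mean dimension, any $\Gamma$-equivariant continuous homomorphism $\pi$ between metrizable compact algebraic actions is the Pontryagin dual $\hat \varphi$ of some $\ZG$-module map $\varphi : \cM \to \cN$; exactness of Pontryagin duality and compactness of the domain identify $\Ker \hat \varphi \cong \widehat{\Coker \varphi}$ and $\Ima \hat \varphi \cong \widehat{\Ima \varphi}$, so Juzvinski\u{\i} for mean dimension translates into additivity of $\vrk$ on $0 \to \Ima \varphi \to \cN \to \Coker \varphi \to 0$. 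Each of the two dynamical Juzvinski\u{\i} formulas thus amounts to $\vrk$-additivity on a class of short exact sequences (either those arising as $0 \to \Ker \varphi \to \cM \to \Ima \varphi \to 0$ or those of the form $0 \to \Ima \varphi \to \cN \to \Coker \varphi \to 0$), and each such class already exhausts all short exact sequences of $\ZG$-modules, so Part 1 closes the loop in both directions.

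The main technical obstacle is bridging the countability hypothesis implicit in Corollary \ref{main theorem} with the fact that dimension-flatness quantifies over all $\ZG$-modules. I expect to handle this by reducing to the countable case using that $\vrk$ respects directed colimits of $\ZG$-modules (because $\LG \otimes_{\ZG} -$ commutes with colimits and $\dim_{\LG}$ is continuous on directed unions of $\LG$-submodules); verifying Juzvinski\u{\i} for $\vrk$ on countable modules then suffices to force additivity of $\vrk$ on all short exact sequences, whence Part 1 delivers full dimension-flatness.
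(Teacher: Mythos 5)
Your proof is correct in substance but takes a genuinely different route from the paper's. You run everything through the long exact sequence of $\mathrm{Tor}^{\ZG}(\LG,-)$ together with additivity of $\dim_{\LG}$ on short exact sequences of $\LG$-modules; this is precisely the alternative the paper flags in the remark after the corollary (``can also be proved using standard properties of Tor functor and additivity of von Neumann-L\"{u}ck dimension'') but does not carry out. The paper instead deduces both directions of the first equivalence from Theorem \ref{theorem for correspondence}(1), i.e.\ from the identity $\beta_1^{(2)}(\cM)=\vrk(\Coker \partial_2)-\vrk(\Ima \partial_1|C_0)$ applied to a finite presentation $\cM=(\ZG)^n/(\ZG)^m f$, and then uses the relative-rank machinery (Lemma \ref{relativity} and Remark \ref{L}) to pass from finitely generated submodules of finitely generated free modules to arbitrary modules; the dimension-shifting and colimit steps at the end are the same in both arguments. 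Your route is more self-contained homological algebra and makes the ``Tor measures the defect of additivity'' mechanism transparent; the paper's route buys uniformity with the rest of the article, since the same relative invariants and Theorem \ref{theorem for correspondence} also drive the sofic statement. Two points need care in your write-up. First, the identity $\dim_{\LG}\mathrm{Tor}_1^{\ZG}(\LG,\cM)=\vrk(K)-\vrk(P_0)+\vrk(\cM)$ involves subtraction, so it is only meaningful when $P_0$ is a \emph{finitely generated} free module; you must first restrict to finitely generated $\cM$ (not merely countable $\cM$) and then pass to general $\cM$ via the colimit argument you sketch --- for a countably but not finitely generated $\cM$ the expression reads $\infty-\infty$. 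Second, your closing reduction from countable (respectively finitely generated) modules to all modules is exactly the content of the paper's Lemma \ref{relativity} and Remark \ref{L}; it is true, but it requires the bookkeeping with relative ranks done there rather than a one-line appeal to continuity of $\vrk$ under directed colimits, since the subobjects and quotients in a short exact sequence must be approximated compatibly.
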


We remark that the first statement of the above corollary can also be proved using standard properties of Tor functor and additivity of von Neumann-L\"{u}ck dimension. In the light of results on the failure of Juzvinski\u{\i} formula  \cite[Proposition 7.2]{Hayes13A} \cite[Corollary 6.24]{Hayes16} \cite[Theorem 6.3]{GS15A}, we show that taking subgroups respects the property of L\"{u}ck's dimension-flatness in Proposition \ref{subgroup}. As a consequence, if $\beta_j^{(2)}(H) > 0$ for some subgroup $H$ of $\Gamma$ and some $j\geq 1$, then $\Gamma$ violoates Juzvinski\u{\i} formula for mean dimension.

L\"{u}ck conjectured that a group is amenable if and only if it satisfies {\it L\"{u}ck's dimension-flatness} \cite[Conjecture 6.48]{Lueck02B}. Bartholdi and Kielak implicitly proved this conjecture using a new characterization of amenability \cite[Theorem 1.1]{BK16}.  It follows that

\begin{corollary}
A countable group is amenable if and only if it satisfies  Juzvinski\u{\i} formula for von Neumann-L\"{u}ck rank.
\end{corollary}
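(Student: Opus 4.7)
The plan is to read off this corollary as an immediate combination of the preceding Corollary \ref{dimension-flat} with L\"{u}ck's conjecture (as now established by Bartholdi--Kielak). No new machinery is needed beyond invoking those two results in the correct order.

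For the forward implication, I start with a countable amenable $\Gamma$. By L\"{u}ck's dimension-flatness theorem \cite[Theorem 6.37]{Lueck02B}, $\beta_j^{(2)}(\cM)=0$ for every $\ZG$-module $\cM$ and every $j\geq 1$; in other words, $\Gamma$ satisfies L\"{u}ck's dimension-flatness over $\bZ$ in the sense of the paper. Corollary \ref{dimension-flat} then gives Juzvinski\u{\i} formula for von Neumann-L\"{u}ck rank.

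For the converse, suppose $\Gamma$ satisfies Juzvinski\u{\i} formula for von Neumann-L\"{u}ck rank. The (easier) first half of Corollary \ref{dimension-flat} upgrades this to L\"{u}ck's dimension-flatness over $\bZ$, i.e.\ $\beta_j^{(2)}(\cM)=0$ for all $\ZG$-modules $\cM$ and all $j\geq 1$. This is precisely the content of L\"{u}ck's dimension-flatness conjecture \cite[Conjecture 6.48]{Lueck02B}, whose implicit resolution by Bartholdi and Kielak \cite[Theorem 1.1]{BK16} then forces $\Gamma$ to be amenable.

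The only subtle point is to make sure the hypothesis fed into Bartholdi--Kielak matches what they actually prove: their characterization of amenability is phrased via a dimension-flatness statement over $\bZ$ for arbitrary modules, which is exactly the conclusion delivered by Corollary \ref{dimension-flat}. Since there is essentially nothing else to do, the main (already discharged) obstacle is the invocation of \cite[Theorem 1.1]{BK16}; the rest is bookkeeping.
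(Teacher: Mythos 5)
Your proposal is correct and matches the paper's (implicit) argument exactly: the corollary is obtained by combining the first statement of Corollary \ref{dimension-flat} with the Bartholdi--Kielak resolution of L\"{u}ck's conjecture, using L\"{u}ck's dimension-flatness theorem for the forward direction. Nothing further is needed.
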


This paper is organized as follows. We recall some background knowledge in Section 2. In Section 3 we introduce the $j$-th mean rank, $j$-th mean dimension, and establish some basic properties. We prove the main results and show some applications in Section 4.

Throughout this paper, $\Gamma$ will be a countable discrete group. For any set $S$, we denote by $\cF(S)$ the set of all nonempty finite subsets of $S$. All modules are assumed to be left modules unless specified. For any $d \in \bN$, we write $[d]$ for the set $\{1, \cdots, d\}$ and $\Sym(d)$ for the permutation group of $[d]$.

\noindent{\it Acknowledgements.}
We are grateful to Lewis Bowen, Ben Hayes, Fabian Henneke, Yonatan Gutman, Huichi Huang, Yang Liu, Yongle Jiang, Wolfgang L\"{u}ck, Jianchao Wu, and Xiaolei Wu for helpful discussions and comments.  The author is  supported by  Max Planck Institute for Mathematics in Bonn.

\section{Preliminaries} \label{S-preliminary}

\subsection{Group rings} 

The {\it integral group ring of $\Gamma$}, denoted by $\ZG$, consists of all finitely supported functions $f: \Gamma\rightarrow \bZ$. We shall write $f$ as $\sum_{s\in \Gamma}f_ss$, where $f_s\in \bZ$ for all $s\in \Gamma$ and $f_s=0$ for all except finitely many $s\in \Gamma$. The algebraic operations on $\ZG$ are defined by
$$ \sum_{s\in \Gamma}f_ss+\sum_{s\in \Gamma}g_ss=\sum_{s\in \Gamma}(f_s+g_s)s, \mbox{ and } \big(\sum_{s\in \Gamma}f_s s\big)\big(\sum_{t\in \Gamma}g_tt\big)=\sum_{s, t\in \Gamma}f_sg_t(st).$$
We similarly have the product if one of $f$ and $g$ sits in $\bC^\Gamma$.

For any countable $\ZG$-module $\cM$, treated as a discrete abelian group, its Pontryagin dual $\hcM$ consisting of all continuous group homomorphisms $\cM \to \bR/\bZ$,  coincides with ${\rm Hom}_{\bZ}(\cM, \bR/\bZ)$. By Pontryagin duality, $\hcM$ is a compact metrizable space under compact-open topology. Furthermore, the $\ZG$-module structure of $\cM$ naturally induces an adjoint action $\Gamma \curvearrowright \hcM$ by continuous automorphisms. To be precise,
$$\langle s\chi, u\rangle: =\langle \chi, s^{-1}x \rangle$$
for all $\chi \in \hcM, u \in \cM$, and $s \in \Gamma$.

\subsection{Relative von Neuman-L\"{u}ck rank }

Let $\ell^2(\Gamma)$ be the Hilbert space of square summable functions $f: \Gamma\rightarrow \bC$, i.e. $\sum_{s\in \Gamma}|f_s|^2<+\infty$. Then $\Gamma$ has two canonical commuting unitary representations on $\ell^2(\Gamma)$, namely the {\it left regular representation} $\lambda$ and the {\it right regular representation} $\rho$ defined by
$$ \lambda(s)(x)=sx, \mbox{ and } \rho(s)(x)=xs^{-1}$$
for all $x\in \ell^2(\Gamma)$ and $s\in \Gamma$. Here we treat $\Gamma$ as a subset of $\CG$.
The {\it (left) group von Neumann algebra} of $\Gamma$, denoted by $\cL\Gamma$, consists of all bounded linear operators
$\ell^2(\Gamma)\rightarrow \ell^2(\Gamma)$ commuting with $\rho(s)$ for all $s\in \Gamma$. 

Denote by $\delta_{e_\Gamma}$ the unit vector of $\ell^2(\Gamma)$ being $1$ at the identity element $e_\Gamma$ of $\Gamma$, and $0$ everywhere else. The canonical {\it trace} on $\cL\Gamma$ is the linear functional $\tr_{\cL\Gamma}: \cL\Gamma\rightarrow \bC$ given by $\tr_{\cL\Gamma}(T)=\left<T\delta_{e_\Gamma}, \delta_{e_\Gamma}\right>$.
 For each $n\in \bN$, the extension of $\tr_{\cL\Gamma}$ to $M_n(\cL\Gamma)$ sending $(T_{j, k})_{1\leq j, k\leq n}$ to $\sum_{j=1}^n\tr_{\cL\Gamma}(T_{j, j})$
 will still be denoted by $\tr_{\cL\Gamma}$. 

For any finitely generated projective $\cL\Gamma$-module $\bP$, one has $\bP\cong (\cL\Gamma)^{1\times n}P$ for some $n\in \bN$ and some $P\in M_n(\cL\Gamma)$ with $P^2=P$. The {\it von Neumann dimension}  of $\bP$ is defined as
$$ \dim'_{\cL\Gamma}(\bP):=\tr_{\cL\Gamma}(P)\in [0, n],$$
which does not depend on the choice of $n$ and $P$. For an arbitrary $\cL\Gamma$-module $\mathbb{M}$, its {\it von Neumann-L\"{u}ck dimension} \cite[Definition 6.6]{Lueck02B} is defined as
$$ \dim_{\cL\Gamma}(\mathbb{M}):=\sup_{\bP}\dim'_{\cL\Gamma}(\bP),$$
for $\bP$ ranging over all finitely generated projective $\cL\Gamma$-submodules of $\mathbb{M}$.

The following theorem collects the fundamental properties of the von Neumann-L\"{u}ck dimension \cite[Theorem 6.7]{Lueck02B}. Given a unital ring $R$, a length function on left $R \Gamma$-modules is a function on left $R \Gamma$-modules satisfying certain conditions (\cite[Definition 2.1]{LL15A}).

\begin{theorem} \label{T-vdim}
$\dim_{\cL\Gamma}$ extends $\dim'_{\cL \Gamma}$ and is a length function on $\cL\Gamma$-modules with $\dim_{\cL\Gamma}(\cL\Gamma)=1$.
\end{theorem}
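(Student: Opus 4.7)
The plan is to verify, in increasing order of difficulty, that $\dim_{\cL\Gamma}$ extends $\dim'_{\cL\Gamma}$, that $\dim_{\cL\Gamma}(\cL\Gamma)=1$, and that $\dim_{\cL\Gamma}$ satisfies the axioms of a length function on $\cL\Gamma$-modules in the sense of \cite[Definition 2.1]{LL15A}: monotonicity, additivity on short exact sequences, and continuity under directed colimits. The normalization will be a one-line trace computation, namely $\tr_{\cL\Gamma}(1)=\langle\delta_{e_\Gamma},\delta_{e_\Gamma}\rangle=1$, once we know that $\dim_{\cL\Gamma}$ restricts to $\dim'_{\cL\Gamma}$ on finitely generated projectives.

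First I would show that $\dim'_{\cL\Gamma}$ is well-defined and monotone on the collection of finitely generated projective $\cL\Gamma$-modules. If two idempotents $P\in M_n(\cL\Gamma)$ and $Q\in M_m(\cL\Gamma)$ yield isomorphic left $\cL\Gamma$-modules $(\cL\Gamma)^{1\times n}P$ and $(\cL\Gamma)^{1\times m}Q$, a stabilization argument produces a Murray--von Neumann equivalence between $P\oplus 0_m$ and $0_n\oplus Q$ inside the finite von Neumann algebra $M_{n+m}(\cL\Gamma)$, and conjugation-invariance of $\tr_{\cL\Gamma}$ then forces $\tr_{\cL\Gamma}(P)=\tr_{\cL\Gamma}(Q)$. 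The same manipulation, together with positivity of the trace, yields monotonicity of $\dim'_{\cL\Gamma}$ under inclusion of finitely generated projective submodules, so that the supremum defining $\dim_{\cL\Gamma}(\bP)$ is attained at $\bP$ itself. This settles the extension property and the normalization in one stroke.

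The main obstacle is additivity on a short exact sequence $0\to\mathbb{M}_1\to\mathbb{M}_2\to\mathbb{M}_3\to 0$ of arbitrary $\cL\Gamma$-modules, because the supremum definition is only useful when the modules contain a rich supply of finitely generated projective submodules. The decisive ingredient, due to L\"uck, is a cofinality lemma: for every finitely generated $\cL\Gamma$-submodule $M$ of an arbitrary $\cL\Gamma$-module, $\dim_{\cL\Gamma}(M)$ is approximated from inside by finitely generated projective submodules, and every finitely presented module even admits a finitely generated projective submodule of the same dimension. The key tool to produce such projective submodules will be the polar decomposition inside $\cL\Gamma$: given a module map $A\colon(\cL\Gamma)^n\to(\cL\Gamma)^m$, the closure of its image is the range of a projection whose trace equals that of the support of $A^*A$, and this projection splits off a projective summand of the target.

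Granting this cofinality lemma, additivity in general will reduce to the finitely generated projective case, where it is immediate from the additivity of the trace on orthogonal idempotents; continuity under directed unions will then follow from the supremum definition and the normality of $\tr_{\cL\Gamma}$. The cofinality lemma is the only step that genuinely uses the fact that $\cL\Gamma$ carries a faithful normal finite trace, and it will constitute the bulk of the work.
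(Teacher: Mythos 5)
The paper gives no proof of this statement: it is imported wholesale from \cite[Theorem 6.7]{Lueck02B}, so the only meaningful comparison is with L\"uck's argument, and your outline is in substance a faithful reconstruction of it --- well-definedness of $\dim'_{\cL\Gamma}$ via the trace identity $\tr_{\cL\Gamma}(ab)=\tr_{\cL\Gamma}(ba)$, the normalization $\tr_{\cL\Gamma}(1)=1$, and the reduction of additivity and cofinality to the finitely generated projective case through the closure/polar-decomposition construction. As a plan it is the right plan.

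Two caveats. First, the ``cofinality lemma'' you propose to grant is not an auxiliary fact but the entire content of the theorem: producing, for a map $A\colon(\cL\Gamma)^n\to(\cL\Gamma)^m$, the projection onto the closure of the image, showing that a submodule and its closure have the same dimension, and deducing that every finitely presented module splits as a finitely generated projective summand plus a summand of dimension zero is where all the functional analysis (spectral calculus for $A^*A$, normality of the trace) enters; additivity, the one genuinely nontrivial axiom of a length function in the sense of \cite[Definition 2.1]{LL15A}, remains unproved until this is written out. Second, your monotonicity step is too quick: an inclusion $\bP_1\subseteq\bP_2$ of finitely generated projective $\cL\Gamma$-modules need not realize $\bP_1$ as a direct summand of $\bP_2$ (already over an abelian von Neumann algebra a free rank-one ideal can fail to be a summand), so ``the same manipulation together with positivity'' does not yield $\dim'_{\cL\Gamma}(\bP_1)\leq\dim'_{\cL\Gamma}(\bP_2)$; one must pass to closures, i.e.\ one already needs the deferred lemma to know that the supremum defining $\dim_{\cL\Gamma}(\bP)$ is attained at $\bP$. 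So the extension property, and not only additivity, rests on the step you postpone.
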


\begin{definition}
For any $\ZG$-modules $\cM_1 \subseteq \cM_2$, the {\it von Neumann-L\"{u}ck rank of $\cM_1$ relative to $\cM_2$} is defined as
$$\vrk(\cM_1|\cM_2):=\dim_\LG (\Ima 1\otimes i),$$
where $1\otimes i$ is the natural map $\LG \otimes \cM_1 \to \LG \otimes \cM_2$.
\end{definition}

Note that when $\cM_1=\cM_2$, we have $\vrk(\cM_1)=\vrk(\cM_1|\cM_2)$.
\subsection{Amenable and sofic groups }

The group $\Gamma$ is called {\it amenable} if for any $K\in \cF(\Gamma)$ and any $\delta>0$ there is a $F\in \cF(\Gamma)$ with $|KF\setminus F|<\delta |F|$.

A sequence of maps $\Sigma=\{\sigma_i: \Gamma \to {\rm \Sym} (d_i)\}_{i \in \bN}$ is called a {\it sofic approximation} for $\Gamma$  if it satisfies:
\begin{enumerate}
\item $\lim_{i\to \infty}|\{v\in [d_i]: \sigma_{i,s}\sigma_{i,t}(v)=\sigma_{i, st}(v)\}|/d_i=1$ for all $s, t\in \Gamma$,

\item $\lim_{i\to \infty}|\{v\in [d_i]: \sigma_{i, s}(v)\neq \sigma_{i,t}(v)\}|/d_i=1$ for all distinct $s, t\in \Gamma$,

\item $\lim_{i\to \infty} d_i=+\infty$.
\end{enumerate}
The group $\Gamma$ is called a {\it sofic group} if it admits a sofic approximation.

Any amenable group is sofic since one can use a sequence of asymptotically-invariant subsets of the amenable group, i.e. {\it F{\o}lner sequence},  to construct a sofic approximation. Residually finite groups are also sofic since a sequence of exhausting finite-index subgroups naturally induces a sofic approximation in which each approximating map is actually a group homomorphism. We refer the reader to \cite{CL15B, CC10B} for more information on sofic groups.

Throughout the rest of this paper, $\Gamma$ will be a countable sofic group, and $\Sigma=\{\sigma_i: \Gamma \to {\rm \Sym} (d_i)\}_{i \in \bN}$ will be a sofic approximation for $\Gamma$.

\subsection{Relative mean dimension and relative mean rank}

We first recall the notion of the covering dimension.
For any finite open cover $\cU$ of a compact metrizable space $Z$, denote the overlapping number of $\cU$ by ${\rm ord}(\cU)$, i.e. ${\rm ord}(\cU)=\max_{x\in X} \sum_{U \in \cU} 1_U(x)-1$. Set 
$$\cD(\cU)=\inf_{\cV} {\rm ord}(\cV)$$
for $\cV$ ranging over all finite open covers of $Z$ finer than $\cU$, i.e. each element of $\cV$ is contained in some element of $\cU$. Then the {\it covering dimension} of $Z$ is defined as $\sup_\cU \cD(\cU)$ for $\cU$ ranging over all finite open covers of $Z$.

Let $\Gamma$ act continuously on a compact metrizable space $X$.

\begin{definition}
Let $\rho$ be a compatible metric on $X$. For any $d \in \bN$, there is a compatible  metric on $X^d$ defined by
$$\rho_2(\varphi, \psi)=\left(\frac{1}{d}\sum_{v \in [d]} \rho(\varphi_v, \psi_v)^2\right)^{1/2}.$$
Let $\sigma$ be a map from $\Gamma$ to $\Sym(d)$, $F \in \cF(\Gamma)$, and $\delta > 0$. The set of approximately equivariant maps $\Map(\rho, F, \delta, \sigma)$ is defined to be the set of all maps $\varphi: [d] \to X$ such that $\rho_2(s\varphi, \varphi \circ \sigma(s)) \leq \delta$.
\end{definition}

Now let $\Gamma$ act on another compact metrizable space $Y$ and $\pi: X \to Y$ be a surjective  $\Gamma$-equivariant continuous map. Denote by $\Map(\pi, \rho, F, \delta, \sigma)$ the set of all $\pi \circ \varphi$ for $\varphi$ ranging in $\Map(\rho, F, \delta, \sigma)$. Note that $\Map (\pi,\rho, F, \delta, \sigma)$ is a closed subset  of $Y^d$. For any finite open cover $\cU$ of $Y$, denote by $\cU^d$ the open cover of $Y^d$ consisting of $\Pi_{v \in [d]} U_v$, where each $U_v$ sits in $\cU$.  Restricting $\cU^d$ to  $\Map(\pi, \rho, F, \delta, \sigma)$, we obtain a finite open cover $\cU^d|_{\Map(\pi, \rho, F, \delta, \sigma)}:=\{U \cap \Map(\pi, \rho, F, \delta, \sigma)\}_{U \in \cU^d}$ of $\Map(\pi, \rho, F, \delta, \sigma)$. 

\begin{definition}
 For any finite open cover $\cU$ of $Y$ we define
$$\mdims(\pi, \cU, \rho, F, \delta)=\varlimsup_{i \to \infty} \frac{\cD(\cU^d|_{\Map(\pi, \rho, F, \delta, \sigma_i)})}{d_i}.$$
If $\Map(\rho, F, \delta, \sigma_i)$ is empty for all sufficiently large $i$, we set  $\mdims(\pi, \cU, \rho, F, \delta)=-\infty$. We define the {\it mean topological dimension of $\Gamma \curvearrowright Y$ relative to the extension $\Gamma \curvearrowright X$ } as
$$\mdims(Y|X):=\sup_\cU \inf_{F \in \cF(\Gamma)} \inf_{\delta >0}\mdims(\pi, \cU, \rho, F, \delta),$$
where $\cU$ ranges over finite open covers of $Y$.
By a similar argument as in \cite[Lemma 2.9]{Li13}, we know $\mdims(Y|X)$ does not depend on the choice of $\rho$. The {\it sofc mean topological dimension of $\Gamma \curvearrowright X$} is defined as 
$$\mdims(X):=\mdims(X|X)$$ for $\pi: X \to X$ being the identity map.
\end{definition}

\begin{example}
Let $\cM_1 \subseteq \cM_2$ be countable $\ZG$-modules. Then the induced map $\widehat{\cM_2} \to \widehat{\cM_1}$ is a surjective $\Gamma$-equivariant continuous map of compact metrizable spaces. Thus $\mdims(\widehat{\cM_1}|\widehat{\cM_2})$ is well-defined.
\end{example}

Now we recall the notion of the relative mean rank. For any $\ZG$-module $\cM$, denote by $\sF(\cM)$ the set of finitely generated abelian subgroups of $\cM$. Let $\sA, \sB \in \sF(\cM), F \in \cF(\Gamma)$, and $\sigma$ be a map from $\Gamma$ to $\Sym(d)$ for some $d \in \bN$. Denote by $\sM(\sA, \sB, F, \sigma)$ the image of $\sA^d$ in $\cM^d/\sM(\sB, F, \sigma)$ under the quotient map $\cM^d \to \cM^d/\sM(\sB, F, \sigma)$. Here $\sM(\sB, F, \sigma)$ denotes the abelian subgroup of $\cM^d\cong \bZ^d \otimes_\bZ \cM$ generated by the elements $\delta_v \otimes b -\delta_{sv} \otimes sb$ for all $v \in [d], b \in \sB$, and $s \in F$.

\begin{definition}
Let $\cM_1 \subseteq \cM_2$ be $\ZG$-modules. For any $\sA \in \sF(\cM_1), \sB \in \sF(\cM_2)$, and $F \in \cF(\Gamma)$, set
$$\mrks(\sA|\sB, F)=\varlimsup_{i \to \infty } \frac{\rk(\sM(\sA, \sB, F, \sigma_i))}{d_i}.$$
We define the {\it mean rank of $\cM_1$ relative to $\cM_2$} as
$$\mrks(\cM_1|\cM_2) =\sup_{\sA \in \sF(\cM_1)} \inf_{F \in \cF(\Gamma)} \inf_{\sB \in \sF(\cM_2)} \mrks(\sA|\sB, F).$$
The {\it sofic mean rank } of $\cM_1$ is then defined as 
$$\mrks(\cM_1):=\mrks(\cM_1|\cM_1).$$
\end{definition}

Applying \cite[Theorem 1.1]{LL15A},\cite[Theorem 7.2]{LL15A}, \cite[Theorem 10.1]{LL15A}, and running a similar argument as in the proof of \cite[Proposition 8.5]{LL15A} for relative mean rank, we have:

\begin{theorem} \label{addition formula for mrk}
For any $\ZG$-modules $\cM_1 \subseteq \cM_2$, we have $\mrks(\cM_1|\cM_2)=\vrk(\cM_1|\cM_2)$ and 
$$\mrks(\cM_2)=\mrks(\cM_1|\cM_2)+\mrks(\cM_2/\cM_1).$$
If furthermore $\cM_2$ is countable, we have $\mdims(\widehat{\cM_1}|\widehat{\cM_2})=\mrks(\cM_1|\cM_2)$.
\end{theorem}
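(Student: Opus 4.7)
The plan is to establish the three assertions in order, each by reducing to one of the cited results in \cite{LL15A}. First, for the identity $\mrks(\cM_1|\cM_2)=\vrk(\cM_1|\cM_2)$, observe that both sides are suprema over $\sA\in\sF(\cM_1)$ of quantities attached to the image of $\sA$ inside $\cM_2$: on the dimension side $\vrk(\sA|\cM_2)$ is the von Neumann--L\"uck dimension of the image of $\LG\otimes\sA$ in $\LG\otimes\cM_2$, and on the sofic side $\mrks(\sA|\sB,F)$ counts the rank of the image of $\sA^{d_i}$ in $\cM_2^{d_i}/\sM(\sB,F,\sigma_i)$, which as $\sB$ exhausts $\cM_2$ records exactly the relations on $\sA$ inherited from $\LG\otimes\cM_2$. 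I would first invoke the absolute case $\mrks(\cN)=\vrk(\cN)$ from \cite[Theorem 1.1]{LL15A} applied to the submodule $\cN:=\ZG\sA\subseteq\cM_2$, then pass from the absolute to the relative statement using the continuity of $\dim_{\LG}$ under directed unions and the fact that the $\sB$-infimum in the definition of $\mrks(\cdot|\cdot)$ converges to the contribution from $\cM_2$.

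Second, the addition formula $\mrks(\cM_2)=\mrks(\cM_1|\cM_2)+\mrks(\cM_2/\cM_1)$ is the relative analogue of \cite[Proposition 8.5]{LL15A}, and I would copy that proof's skeleton. For the inequality $\leq$, decompose generators of $\sA\in\sF(\cM_2)$ according to a short exact sequence $0\to \sA\cap\cM_1\to\sA\to\sA/(\sA\cap\cM_1)\to 0$ and use subadditivity of $\rk$ on $\sM(\sA,\sB,F,\sigma_i)$. For the inequality $\geq$, lift generators from $\cM_2/\cM_1$ back to $\cM_2$, combine with independent generators of the relative piece, and verify that the combined family remains rank-independent in $\cM_2^{d_i}/\sM(\sB,F,\sigma_i)$ for $\sB$ chosen large enough so that the unwanted relations between the lift and $\cM_1$ are absorbed. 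The application of \cite[Theorem 7.2]{LL15A} enters here through the identification of each summand with the corresponding von Neumann--L\"uck rank, which is genuinely additive.

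Third, for countable $\cM_2$ the duality identity $\mdims(\widehat{\cM_1}|\widehat{\cM_2})=\mrks(\cM_1|\cM_2)$ is obtained from \cite[Theorem 10.1]{LL15A}. Pontryagin duality identifies the surjection $\widehat{\cM_2}\to\widehat{\cM_1}$ with the dual of the inclusion $\cM_1\hookrightarrow\cM_2$, and the relative mean dimension is defined precisely so that the covering-dimension computation $\cD(\cU^{d_i}|_{\Map(\pi,\rho,F,\delta,\sigma_i)})$ matches the rank $\rk(\sM(\sA,\sB,F,\sigma_i))$ through dual bases adapted to $\sA\subseteq\cM_1$ and $\sB\subseteq\cM_2$. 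The argument is essentially parametrized by the same sofic approximation data and goes through by repeating the absolute proof with $\pi:\widehat{\cM_2}\to\widehat{\cM_1}$ in place of the identity.

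The main obstacle I expect is the reverse inequality in the addition formula: sofic invariants are typically only subadditive, and making true additivity hold requires a delicate simultaneous lifting. Concretely, one must exhibit, in $\cM_2^{d_i}/\sM(\sB,F,\sigma_i)$, independent families realizing both the full relative rank inside $\cM_1$ and the full mean rank of the quotient $\cM_2/\cM_1$, while controlling the cross-relations introduced by lifts from $\cM_2/\cM_1$. It is exactly for this bookkeeping that the relative mean rank has been defined with an infimum over $\sB\in\sF(\cM_2)$, and I expect that enlarging $\sB$ sufficiently absorbs the cross-relations and allows one to match $\mrks(\cM_2)$ from above, closing the addition formula.
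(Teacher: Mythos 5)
There is a genuine gap in your treatment of the first identity $\mrks(\cM_1|\cM_2)=\vrk(\cM_1|\cM_2)$. You propose to invoke the absolute comparison $\mrks(\cN)=\vrk(\cN)$ for $\cN:=\ZG\sA\subseteq\cM_1$ and then ``pass from the absolute to the relative statement'' by continuity of $\dim_{\LG}$ and of the $\sB$-infimum. This cannot work: the relative invariants are not limits of absolute invariants of submodules of $\cM_1$. The discrepancy between $\vrk(\cM_1)$ and $\vrk(\cM_1|\cM_2)$ is the dimension of the kernel of $\LG\otimes\cM_1\to\LG\otimes\cM_2$, i.e.\ a ${\rm tor}_1^{\ZG}(\LG,\cM_2/\cM_1)$ term --- the very quantity this paper identifies with a first $L^2$-Betti number --- and it is nonzero in general for nonamenable $\Gamma$. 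Concretely, for $\Gamma=\bF_2$ and the example of Section 3, take $\cM_2=\ZG$ and $\cM_1=(\ZG)^{1\times 2}f\cong(\ZG)^{1\times 2}$ with $f=(a-1,b-1)^T$: then $\mrks(\cM_1)=\vrk(\cM_1)=2$ while $\vrk(\cM_1|\cM_2)=\vrk(\cM_2)-\vrk(\bZ)=1$. Any argument whose only input is the absolute identity for submodules of $\cM_1$ therefore proves the wrong statement. The $\sB$-infimum over $\sF(\cM_2)$ imposes relations coming from elements of $\cM_2$ outside $\cM_1$, and the comparison must be carried out directly against $\dim_\LG$ of the \emph{image} of $\LG\otimes\cM_1$ in $\LG\otimes\cM_2$; this is exactly the relative rerun of \cite[Proposition 8.5]{LL15A} that the text points to, and it is not a formal consequence of the absolute case.

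Two further remarks. The addition formula in your second step is itself the main theorem of \cite{LL15A}, already stated there in relative form for general length functions, so the ``delicate simultaneous lifting'' you anticipate is the content of that paper and need not be redone here; moreover the combinatorial reproof you sketch substantially underestimates that difficulty, whereas the easy route your final sentence of step two gestures at does work: once the first identity is available for all three terms, additivity follows from right-exactness of $\LG\otimes_\ZG\cdot$ applied to $\LG\otimes\cM_1\to\LG\otimes\cM_2\to\LG\otimes(\cM_2/\cM_1)\to 0$ together with additivity of $\dim_\LG$. Your third step is fine in outline, provided the cited \cite[Theorem 10.1]{LL15A} is taken in its relative form for the surjection $\widehat{\cM_2}\to\widehat{\cM_1}$ rather than rederived from the absolute case, for the same reason as above.
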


The following proposition collects basic properties of the sofic mean rank \cite[Section 3]{LL15A}.
\begin{proposition} \label{properties of mrk}
Let $\cM_1 $ and $\cM_2$ be $\ZG$-modules. The following are true.
\begin{enumerate}
\item $\mrks(\ZG)=1$.
\item $\mrks(\cM_1|\cM_1 \oplus \cM_2)=\mrks(\cM_1)$ and $\mrks(\cM_1 \oplus \cM_2)=\mrks(\cM_1)+\mrks(\cM_2).$
\item If $\cM_1 \subseteq \cM_2$ and $\cM_1$ is the union of an increasing net of $\ZG$-submodules $\{\cM_j'\}_{j \in \cJ}$, then $\mrks(\cM_j'|\cM_2) \nearrow \mrks(\cM_1|\cM_2)$. If furthermore $\mrks(\cM_2) < \infty$, then $\mrks(\cM_2/\cM_j') \searrow \mrks(\cM_2/\cM_1) $.
\item Assume that $\cM_1 \subseteq \cM_2$, $\cM_1$ is finitely generated, and $\cM_2$ is the union of an increasing net of $\ZG$-submodules $\{\cM_j'\}_{j \in \cJ}$ of $\cM_2$ containing $\cM_1$. Then $\mrks(\cM_1|\cM_j') \searrow \mrks(\cM_1|\cM_2)$.
\end{enumerate}
\end{proposition}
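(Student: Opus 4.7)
The plan is to derive all four items from Theorem \ref{addition formula for mrk} (which identifies $\mrks(\cdot|\cdot)$ with $\vrk(\cdot|\cdot)$ and provides the addition formula), Theorem \ref{T-vdim}, and direct inspection of the definition of $\mrks$, following the arguments of \cite[Section 3]{LL15A}.

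Part (1) is immediate: Theorem \ref{addition formula for mrk} together with Theorem \ref{T-vdim} yields $\mrks(\ZG) = \vrk(\ZG) = \dim_\LG(\LG) = 1$. For (2), I would use that the inclusion $i: \cM_1 \hookrightarrow \cM_1 \oplus \cM_2$ is $\ZG$-split (by the projection to $\cM_1$), so $1\otimes i: \LG \otimes_\ZG \cM_1 \to \LG \otimes_\ZG (\cM_1 \oplus \cM_2)$ is a split $\LG$-injection; its image therefore has $\dim_\LG$ equal to $\dim_\LG(\LG \otimes_\ZG \cM_1) = \vrk(\cM_1) = \mrks(\cM_1)$, which gives $\mrks(\cM_1|\cM_1 \oplus \cM_2) = \mrks(\cM_1)$ via Theorem \ref{addition formula for mrk}. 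The second equation then follows by combining the first with the addition formula applied to $\cM_1 \subseteq \cM_1 \oplus \cM_2$.

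For (3), the monotonicity $\mrks(\cM_j'|\cM_2) \leq \mrks(\cM_{j+1}'|\cM_2) \leq \mrks(\cM_1|\cM_2)$ is immediate from $\sF(\cM_j') \subseteq \sF(\cM_{j+1}') \subseteq \sF(\cM_1)$. The matching upper bound is the point: for any $\sA \in \sF(\cM_1)$, being finitely generated as an abelian group, $\sA$ sits in some $\cM_j'$, so $\inf_F \inf_\sB \mrks(\sA|\sB, F) \leq \lim_j \mrks(\cM_j'|\cM_2)$; taking the sup over $\sA$ concludes. The decreasing statement follows by subtraction from the addition formula $\mrks(\cM_2) = \mrks(\cM_j'|\cM_2) + \mrks(\cM_2/\cM_j')$, where $\mrks(\cM_2) < \infty$ is used. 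For (4), monotonicity $\mrks(\cM_1|\cM_j') \searrow$ is automatic since enlarging $\sB$ only enlarges $\sM(\sB, F, \sigma_i)$ and hence reduces the rank of the image in $\cM^{d_i}/\sM(\sB, F, \sigma_i)$. The limit equals $\mrks(\cM_1|\cM_2)$ by the following scheme: fix a finite $\ZG$-generating set $\sA_0 \in \sF(\cM_1)$ so that $\{F \sA_0 : F \in \cF(\Gamma)\}$ is cofinal in $\sF(\cM_1)$; for each such cofinal $\sA = F \sA_0$, any $\sB \in \sF(\cM_2)$ eventually lies in some $\cM_j'$, so the infimum over $\sF(\cM_j')$ converges down to the infimum over $\sF(\cM_2)$ as $j \to \infty$; one then diagonalizes to promote this convergence to the sup over $\sA$.

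I expect part (4) to be the main obstacle: it requires interchanging a supremum over $\sF(\cM_1)$ with an infimum over $j$, which is not a formal identity. What makes it admissible is precisely the hypothesis that $\cM_1$ is finitely generated as a $\ZG$-module, producing a cofinal family of $\sA$'s indexed by the directed set $\cF(\Gamma)$. Without this hypothesis the decreasing limit can strictly exceed $\mrks(\cM_1|\cM_2)$.
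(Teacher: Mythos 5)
The paper does not actually prove this proposition; it is quoted from \cite[Section 3]{LL15A}, so the only comparison available is with the source. Your items (1)--(3) are correct. Do note, however, that you derive (1) and (2) from Theorem \ref{addition formula for mrk} (the identification $\mrks=\vrk$ and the addition formula), which is the deep result of \cite{LL15A}; in the source these elementary properties are proved directly from the definition and are used \emph{en route} to that theorem, so your derivation inverts the original order of deduction. Within this paper's architecture that is admissible, since both statements are imported as black boxes, but a self-contained proof of (1) and (2) from the definition would be preferable. Your argument for (3) --- every $\sA\in\sF(\cM_1)$ lies in some $\cM_j'$ because the net is increasing and directed, so $\sF(\cM_1)=\bigcup_j\sF(\cM_j')$ and the suprema agree; then subtract via the addition formula using $\mrks(\cM_2)<\infty$ --- is complete and is the standard one.

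Part (4) has a genuine gap. You correctly isolate the difficulty (interchanging $\sup_\sA$ with $\inf_j$) and the relevant hypothesis (finite generation of $\cM_1$), but ``one then diagonalizes'' does not close it. Setting $g_j(\sA):=\inf_F\inf_{\sB\in\sF(\cM_j')}\mrks(\sA|\sB,F)$, you have a double net with $g_j$ decreasing in $j$ and increasing in $\sA$, and such a net can perfectly well satisfy $\inf_j\sup_\sA g_j>\sup_\sA\inf_j g_j$ (e.g.\ $g_j(F\sA_0)=1$ if $|F|\geq j$ and $0$ otherwise); a cofinal family of $\sA$'s indexed by $\cF(\Gamma)$ is not by itself enough. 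What actually makes the interchange valid is a collapse lemma: if $\sA_0\in\sF(\cM_1)$ generates $\cM_1$ as a $\ZG$-module, $e_\Gamma\in F_0$, $\sA_0\subseteq\sB$, and $F_0\subseteq F$, then the defining relations $\delta_{\sigma(s)(v)}\otimes sb\equiv\delta_v\otimes b$ of $\sM(\sB,F,\sigma)$ show that the image of $(F_0\sA_0)^d$ in $\cM_2^d/\sM(\sB,F,\sigma)$ coincides with the image of $\sA_0^d$, whence $\mrks(F_0\sA_0|\sB,F)=\mrks(\sA_0|\sB,F)$ and hence $g_j(F_0\sA_0)=g_j(\sA_0)$ for every $j$. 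The outer supremum therefore degenerates to the single subgroup $\sA_0$, after which
$$\inf_j\inf_F\inf_{\sB\in\sF(\cM_j')}\mrks(\sA_0|\sB,F)=\inf_F\inf_{\sB\in\sF(\cM_2)}\mrks(\sA_0|\sB,F)$$
because $\sF(\cM_2)=\bigcup_j\sF(\cM_j')$ and nested infima commute. This lemma is the missing step you need to state and prove; it is precisely the content of the corresponding argument in \cite{LL15A}.
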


\section{$L^2$-Betti number, $j$-th mean rank, and $j$-th mean dimension}

Let $\cC_\ast$ be a chain complex  of $\ZG$-modules: 
$$\cdots \stackrel{\partial_2}{\to} C_1 \stackrel{\partial_1}{\to} C_0 \stackrel{\partial_0}{\to} 0  (=C_{-1}).$$
Applying the covariant tensor functor $\LG \otimes_{\ZG}\cdot$, we get a chain complex $\LG \otimes_\ZG \cC_\ast$ of $\LG$-modules: 
$$\cdots \stackrel{1\otimes \partial_2}{\longrightarrow} \LG \otimes C_1 \stackrel{1\otimes \partial_1}{\longrightarrow} \LG \otimes C_0 \to 0;$$
applying the contravariant Pontryagin dual functor ${\rm Hom}_\bZ(\cdot, \bR/\bZ):=\widehat{\cdot}$, we get a chain complex $\widehat{\cC_\ast}$ of 
algebraic actions such that the maps $\{\widehat{\partial_j}\}_j$ are $\Gamma$-equivariant:
$$\cdots  \stackrel{\widehat{\partial_2}}{\longleftarrow}\widehat{C_1} \stackrel{\widehat{\partial_1}}{\longleftarrow} \widehat{C_0} \longleftarrow 0.$$
Since $\bR/\bZ$ is an injective $\bZ$-module, when $\Ima \partial_{j+1}=\Ker \partial_j$, we have $\Ker \widehat{\partial_{j+1}}=\Ima \widehat{\partial_j}$.

\begin{definition} \label{mean rank for chain complex}
 For each $j\geq 0$, the {\it j-th $L^2$-Betti number of $\cC_\ast$} is defined as 
$$\beta_j^{(2)}(\cC_\ast)=\dim_\LG H_j(\LG \otimes_\ZG \cC_\ast).$$
If $\vrk(C_j) < \infty$ for all $j \geq 1$ and $C_j=0$ as $j $ is large enough, we define the {\it Euler characteristic} of $\cC_\ast$ as 
$$\chi(\cC_\ast):=\sum_{j \geq 0} (-1)^j \vrk(C_j).$$
When  $\vrk(C_j)< \infty$ for some $j \geq 0$ and $\Gamma$ is sofic, we define the {\it $j$-th mean rank of $\cC_\ast$} as
$$\mrk_j(\cC_\ast)=\mrks (\Coker \partial_{j+1})-\mrks(\Ima \partial_j|C_{j-1}).$$
If furthermore $C_j$ and $C_{j-1}$ are countable, we define the {\it j-th mean topological dimension of $\cC_\ast$} as
$$\mdim_j(\widehat{\cC_\ast}):=\mdims(\Ker \widehat{\partial_{j+1}})-\mdims(\Ima \widehat{\partial_j}|\widehat{C_{j-1}}).$$
\end{definition}

\begin{remark}
\begin{enumerate}
\item  When $\cC_\ast$ is a chain complex  of $\CG$-modules, since $\CG$ is flat as a $\ZG$-module, we have $\beta_j^{(2)}(\cC_\ast)=\dim_\LG H_j(\LG \otimes_\CG \cC_\ast)$, which extends the definition of $L^2$-Betti numbers for chain complexes of $\CG$-modules \cite[Definition1.16, Theorem 6.24]{Lueck02B}.

\item By Theorem \ref{addition formula for mrk}, we know that the $j$-th mean rank and $j$-th mean dimension are well defined.

\item Wall gave some criteria when a chain complex of $\ZG$-modules can be realized as the chain complex of a $\Gamma$-CW complex \cite[Theorem 2]{Wall66}.
\end{enumerate}
\end{remark}

Let $\cM$ be a $\ZG$-module. A projective resolution of $\cM$ is an exact sequence of $\ZG$-modules
$$\cdots \stackrel{\partial_2}{\longrightarrow} C_1 \stackrel{\partial_1}{\longrightarrow} C_0 \stackrel{\partial_0}{\longrightarrow} \cM \longrightarrow  0$$
in which each $C_j$ is a projective $\ZG$-module. Denote by $\cC_\ast$ its deleted projective resolution
$$\cdots \stackrel{\partial_2}{\longrightarrow} C_1 \stackrel{\partial_1}{\longrightarrow} C_0  \longrightarrow  0,$$
 which is a chain complex of $\ZG$-modules. We similarly have the notion of free resolution. Apply the notion of free module, we know that any $\ZG$-module admits a free resolution \cite[Proposition 10.32]{Rotman10B}.

\begin{definition} \label{j-th mdim}

 For each $j\geq 0$,  we define the {\it j-th $L^2$-Betti number of $\cM$}  as 
$$\beta_j^{(2)}(\cM):=\beta_j^{(2)}(\cC_\ast).$$
The {\it j-th $L^2$-Betti number $\beta_j^{(2)}(\Gamma)$   of  $\Gamma$} is defined as the j-th $L^2$-Betti number of the trivial $\ZG$-module $\bZ$.

If $\vrk(C_j) < \infty$ for all $j \geq 1$ and $C_j=0$ as $j $ is large enough, we define the {\it Euler characteristic} of $\cM$ as 
$$\chi(\cM):=\chi(\cC_\ast).$$
When $\vrk(C_j)< \infty$ for some $j \geq 0$ and $\Gamma$ is sofic, we define the {\it $j$-th mean rank of $\cM$} as
$$\mrk_j(\cM):=\mrk_j(\cC_\ast).$$
If furthermore $\cM$ is countable, we can choose $\cC_\ast$ such that each $C_j$ for $j \geq 0$ is countable and  define the {\it j-th mean topological dimension of $\cM$} as
$$\mdim_j(\widehat{\cM}):=\mdim_j(\widehat{\cC_\ast}).$$

\end{definition}

\begin{remark}
In fact, $\beta_j^{(2)}(\cM)$ is the von Neumann-L\"{u}ck dimension of ${\rm tor}_j^{\ZG}(\LG, \cM)$ (see  \cite[Page 836]{Rotman10B} for definition).
Based on the Comparison Theorem for projective resolutions \cite[Theorem 10.46]{Rotman10B},  any two projective resolutions of $\cM$ are homotopy equivalent,  we know that $\beta_j^{(2)}(\cM)$ does not depend on the choice of projective resolutions \cite[Corollary 10.51]{Rotman10B}. We refer the reader to \cite[Chapter VIII]{Brown94B} for discussions on when a $\ZG$-module admits a ``small" projective resolution.
\end{remark}

\begin{proposition} \label{0th case}
For any $\ZG$-module $\cM$, we have $\beta_0^{(2)}(\cM)=\vrk(\cM)$ and $\mrk_0(\cM)=\mrks(\cM)$. When $\cM$ is countable, we have $\mdim_0(\hcM)=\mdims(\hcM)$.
\end{proposition}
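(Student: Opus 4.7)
The proposition reduces to three essentially mechanical identifications, so my plan is to read off each side of the three equations directly from the relevant definitions and then invoke the appropriate general fact to identify them.

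For the first statement, I would fix any projective resolution $\cdots \to C_1 \xrightarrow{\partial_1} C_0 \to \cM \to 0$ with deleted complex $\cC_\ast$. Since $\LG \otimes_\ZG -$ is right exact, applying it to the exact sequence $C_1 \xrightarrow{\partial_1} C_0 \to \cM \to 0$ yields
$$H_0(\LG \otimes_\ZG \cC_\ast) = \LG \otimes_\ZG C_0 / \Ima(1 \otimes \partial_1) \cong \LG \otimes_\ZG \cM.$$
Then by definition of $\vrk$ (namely $\vrk(\cM) = \dim_\LG(\LG \otimes_\ZG \cM)$, which coincides with $\vrk(\cM|\cM)$), we get $\beta_0^{(2)}(\cM) = \dim_\LG H_0(\LG \otimes_\ZG \cC_\ast) = \vrk(\cM)$.

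For the second statement I just plug $j=0$ into Definition \ref{mean rank for chain complex}. Since $C_{-1} = 0$, we have $\partial_0 = 0$ in the deleted complex, so $\Ima\,\partial_0 = 0$ and $\mrks(\Ima\,\partial_0 | C_{-1}) = 0$. On the other hand, exactness of the original resolution at $C_0$ gives $\Coker \partial_1 = C_0/\Ima\,\partial_1 \cong \cM$. Hence
$$\mrk_0(\cM) = \mrks(\Coker\partial_1) - \mrks(\Ima\partial_0 | C_{-1}) = \mrks(\cM),$$
where one observes independently that this value is independent of the projective resolution chosen (which is ensured by the addition formula from Theorem \ref{addition formula for mrk}, though we do not need it explicitly here).

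For the third statement, assume $\cM$ is countable and choose a countable projective resolution (for example a free one). Applying the contravariant exact functor $\widehat{\cdot}$ to $C_1 \xrightarrow{\partial_1} C_0 \to \cM \to 0$ gives, by Pontryagin duality, a $\Gamma$-equivariant identification $\widehat{\cM} \cong \Ker\,\widehat{\partial_1}$ as closed $\Gamma$-invariant subgroups of $\widehat{C_0}$. Also $C_{-1}=0$ forces $\widehat{\partial_0}=0$, hence $\Ima\,\widehat{\partial_0}=\{0\}$ and $\mdims(\Ima\,\widehat{\partial_0}|\widehat{C_{-1}})=0$. Therefore
$$\mdim_0(\widehat{\cC_\ast}) = \mdims(\Ker\,\widehat{\partial_1}) - 0 = \mdims(\widehat{\cM}).$$
The only minor subtlety — and the one step I would spell out carefully — is checking that $\mdims$ is invariant under the $\Gamma$-equivariant topological isomorphism $\widehat{\cM} \cong \Ker\,\widehat{\partial_1}$; this is immediate since mean dimension depends only on the action up to topological conjugacy. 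No genuine obstacle arises; the proposition is essentially a sanity check confirming that the chain-complex-level definitions in Definition \ref{mean rank for chain complex} and Definition \ref{j-th mdim} agree with the previously established module-level invariants in degree zero.
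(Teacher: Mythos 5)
Your proposal is correct and follows essentially the same route as the paper: both identify $H_0(\LG\otimes_\ZG\cC_\ast)\cong\LG\otimes_\ZG\cM$ via right-exactness of the tensor functor, identify $\Coker\partial_1\cong\cM$ and $\Ker\widehat{\partial_1}\cong\hcM$ from exactness of the resolution and of Pontryagin duality, and observe that the relative terms at $j=0$ vanish because $C_{-1}=0$. No gaps.
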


\begin{proof} 
From the exactness, we have
$$\LG \otimes C_0/\Ima 1\otimes \partial_1=\LG \otimes C_0/\Ker 1\otimes \partial_0\cong \Ima 1\otimes \partial_0=\LG \otimes \cM$$
and 
$$C_0/\Ima \partial_1=C_0/\Ker \partial_0 \cong \Ima \partial_0 =\cM.$$
So by definition, we have
$$\beta_0^{(2)}(\cM)=\dim_\LG (\LG \otimes C_0/\Ima 1\otimes \partial_1)=\dim_\LG (\LG \otimes \cM)=\vrk(\cM)$$
and 
$$\mrk_0(\cM)=\mrks(\Coker \partial_1)=\mrks(C_0/\Ima \partial_1)= \mrks(\cM).$$

From the exactness, we have $\Ker \widehat{\partial_1}=\Ima \widehat{\partial_0}\cong \hcM$. So when $\cM$ is countable, we have 
$$\mdim_0(\hcM)=\mdims(\Ker \widehat{\partial_1})=\mdims(\hcM).$$

\end{proof}

\begin{example}
Let $\Gamma=\bF_2$ be the free group with generators $a$ and $b$. Set $f=(a-1, b-1)^T \in (\bZ \Gamma)^{2 \times 1}$.  Then $\cM:=(\ZG)^{1\times 1}/(\ZG)^{1\times 2}f$ has the following free resolution:
$$0 \to (\ZG)^{1\times 2} \stackrel{R (f)}{\longrightarrow} (\ZG)^{1\times 1} \to \cM \to 0,$$
where $R(f)$ sends $x$ to $xf$.
Note that $\cM \cong \bZ$ as the $\ZG$-modules for the trivial $\ZG$-module $\bZ$. By  \cite[Lemma 6.36]{Lueck02B}, we know $\LG\otimes_{\ZG} \bZ =0 $. Thus $\beta_0^{(2)}(\cM)=0$. It follows that $\beta_1^{(2)}(\cM)=\dim_\LG \Ker 1\otimes R(f)=\beta_0^{(2)}(\cM)-\chi(\cM)=0-(1-2)=1$. This example is essentially the same as $0 \to \Ker \ \varepsilon \to \bZ \Gamma \stackrel{\varepsilon}{\to } \bZ \to 0$, where $\varepsilon$ is the argumentation map. See \cite[Chapter IV, Theorem 2.12]{Dicks80} for a characterization of when $\Ker  \varepsilon$ is a projective $\ZG$-module.  

\end{example}

\section{Main Results} \label{S-mean length}

Corollary \ref{main theorem} follows when we apply Theorem \ref{theorem for correspondence}  to a deleted projective resolution of a $\ZG$-module $\cM$.  

\begin{proof}[Proof of Theorem \ref{theorem for correspondence}]
(1) Note that for any $\ZG$-module homomorphism $\varphi: \cM \to \cN$ and the inclusion map $i: \Ima \varphi \to \cN$, we have $\Ima 1\otimes \varphi=\Ima 1\otimes i$. Thus $\dim_{\LG}(\Ima 1\otimes \varphi)=\vrk(\Ima \varphi|\cN)$. For $\ZG$-modules $\Ima \partial_{j+1} \subseteq C_j$, we have 
$$\vrk(C_j)=\vrk(\Ima \partial_{j+1}|C_j)+\vrk(\Coker \partial_{j+1}).$$
Since the function $\dim_\LG(\cdot)$ is additive, we have
\begin{align*}
\beta_j^{(2)}(\cC_\ast)&=\dim_\LG (\Ker 1\otimes \partial_j) -\dim_\LG (\Ima1\otimes \partial_{j+1})\\
&=(\dim_\LG (\LG\otimes C_j) -\dim_\LG (\Ima1\otimes \partial_j))-\dim_\LG (\Ima1\otimes \partial_{j+1})\\
&=(\vrk(\Ima \partial_{j+1}|C_j)+\vrk(\Coker \partial_{j+1})) -\dim_\LG (\Ima1\otimes \partial_j)-\dim_\LG (\Ima1\otimes \partial_{j+1})\\
&=\vrk (\Coker \partial_{j+1})-\vrk(\Ima \partial_j|C_{j-1}).
\end{align*}

(2) For any subgroup $H$ of a discrete abelian group $G$, denote by $H^\perp$ the elements of $\widehat{G}$ which vanish on $H$. By Pontryagin duality,  we have $H^\perp \cong \widehat{G/H}$. 
Thus 
$$\Ker \widehat{\partial_{j+1}}=\{\chi \in \widehat{C_j}: \chi \circ \partial_{j+1}=0\}=(\Ima \partial_{j+1})^\perp \cong \widehat{C_j/\Ima \partial_{j+1}}=\widehat{\Coker \partial_{j+1}}.$$
By definition, we have $\mdims(\Ima \widehat{\partial_j}|\widehat{C_{j-1}})=\mdims(\widehat{\Ima \partial_j}|\widehat{C_{j-1}})$.
Thus by Theorem \ref{addition formula for mrk}, the equalities follow from (1).

(3) By the addition formula of von Neumann-L\"{u}ck dimension, we first have
$$\vrk(C_{j-1})=\vrk(\Ima \partial_j|C_{j-1})+\vrk(\Coker \partial_j).$$
When $\cC_\ast$ is exact at $C_j$, we have $\Coker \partial_{j+1}=\Ima \partial_j$. Thus the first statement follows from (1). The second statement follows from Pontryagin duality and the first statement.
\end{proof}

The follow proposition is an immediate consequence of Theorem \ref{theorem for correspondence}, which can also be proved directly.
\begin{proposition} \label{Euler equality}
Let $\cC_\ast$ be a chain complex of $\ZG$-modules: $0 \to C_k \to \cdots \to C_0 \to 0$ for some $k \in \bN$. If $\vrk(C_j) < \infty$ for all $j \geq 1$, then
$$\sum_{0 \leq j \leq k} (-1)^j \beta_j^{(2)}(\cC_\ast)=\chi(\cC_\ast)=\sum_{0 \leq j \leq k} (-1)^j \mrk_j(\cC_\ast).$$
In particular, when $\cC_\ast$ is exact, we have
$$\chi(\cC_\ast)=\sum_{0 \leq j \leq k} (-1)^j (\mrks(\Ima \partial_j)-\mrks(\Ima \partial_j|C_{j-1})).$$
\end{proposition}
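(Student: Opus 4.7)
The plan is to deduce the proposition directly from Theorem \ref{theorem for correspondence}(1)--(2), the additivity of $\dim_{\LG}$, and the boundary conditions $C_{k+1}=C_{-1}=0$. The whole proof is essentially a telescoping/index-shift argument, so there is no serious obstacle; the only care needed is at the endpoints $j=0$ and $j=k$.

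First I would rewrite $\vrk(C_j)$ using the additivity of the von Neumann--L\"uck dimension applied to the short exact sequence $0 \to \Ima \partial_{j+1} \to C_j \to \Coker \partial_{j+1} \to 0$, obtaining
\begin{equation*}
\vrk(C_j) = \vrk(\Ima \partial_{j+1}|C_j) + \vrk(\Coker \partial_{j+1}).
\end{equation*}
Substituting this into the definition of $\chi(\cC_\ast)$ gives
\begin{equation*}
\chi(\cC_\ast) = \sum_{j=0}^{k}(-1)^{j}\bigl[\vrk(\Coker \partial_{j+1}) + \vrk(\Ima \partial_{j+1}|C_j)\bigr].
\end{equation*}

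Next I would compare this with the alternating sum of $\beta_{j}^{(2)}(\cC_\ast)$ by applying Theorem \ref{theorem for correspondence}(1), which gives $\beta_{j}^{(2)}(\cC_\ast)=\vrk(\Coker \partial_{j+1})-\vrk(\Ima \partial_j|C_{j-1})$. Shifting the index in the sum $\sum_{j=0}^{k}(-1)^{j}\vrk(\Ima \partial_j|C_{j-1})$ via $j'=j-1$ turns it (up to sign) into $\sum_{j'=0}^{k}(-1)^{j'}\vrk(\Ima \partial_{j'+1}|C_{j'})$, once we use the boundary identities $\vrk(\Ima \partial_0|C_{-1})=\vrk(0|0)=0$ and $\vrk(\Ima \partial_{k+1}|C_k)=\vrk(0|C_k)=0$ to fold the missing endpoint terms harmlessly into the sum. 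This telescoping identifies $\sum_{j=0}^{k}(-1)^{j}\beta_{j}^{(2)}(\cC_\ast)$ with the expression for $\chi(\cC_\ast)$ obtained above.

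For the second equality I would invoke Theorem \ref{addition formula for mrk}, which gives $\mrks(\cN_1|\cN_2)=\vrk(\cN_1|\cN_2)$ for any $\ZG$-modules $\cN_1\subseteq \cN_2$; in particular $\mrks(\Coker \partial_{j+1})=\vrk(\Coker \partial_{j+1})$ and $\mrks(\Ima \partial_j|C_{j-1})=\vrk(\Ima \partial_j|C_{j-1})$. Hence $\mrk_j(\cC_\ast)=\beta_j^{(2)}(\cC_\ast)$ under the standing finiteness hypotheses, and the middle equality is immediate from the first. For the final ``in particular'' statement, exactness of $\cC_\ast$ gives $\Coker \partial_{j+1}=C_j/\Ker \partial_j\cong \Ima \partial_j$, so $\mrks(\Coker \partial_{j+1})=\mrks(\Ima \partial_j)$, and substituting into $\sum(-1)^{j}\mrk_j(\cC_\ast)$ produces the claimed formula. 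The one subtlety worth double-checking is that all mean ranks appearing are finite, which is ensured by $\vrk(C_j)<\infty$ together with the monotonicity of $\mrks(\cdot|C_{j-1})=\vrk(\cdot|C_{j-1})$.
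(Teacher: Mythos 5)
Your proof is correct and follows exactly the route the paper intends: the paper itself only remarks that the proposition is ``an immediate consequence of Theorem \ref{theorem for correspondence},'' and your telescoping of $\vrk(C_j)=\vrk(\Ima \partial_{j+1}|C_j)+\vrk(\Coker \partial_{j+1})$ against $\beta_j^{(2)}(\cC_\ast)=\vrk(\Coker \partial_{j+1})-\vrk(\Ima \partial_j|C_{j-1})$, with the endpoint terms vanishing because $C_{-1}=C_{k+1}=0$, is precisely the computation being left to the reader. The passage to $\mrk_j$ via Theorem \ref{addition formula for mrk} and the identification $\Coker\partial_{j+1}\cong\Ima\partial_j$ in the exact case are likewise as intended.
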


The following lemma reduces the Juzvinski\u{\i} formula for mean rank to the finitely generated case.
\begin{lemma} \label{relativity}
Suppose that $\mrks(\cM_1|\cM_2)=\mrks(\cM_1)$ holds when $\cM_2$ is a finitely generated free $\ZG$-module and $\cM_1$ is a finitely generated $\ZG$-submodule of $\cM_2$. Then $\mrks(\cM_1|\cM_2)=\mrks(\cM_1)$ holds for any $\ZG$-modules $\cM_1 \subseteq \cM_2$. In particular, $\Gamma$ satisfies Juzvinski\u{\i} formula for mean rank if and only if $\mrks(\cM_1|\cM_2)=\mrks(\cM_1)$ holds when $\cM_2$ is a finitely generated free $\ZG$-module and $\cM_1$ is a finitely generated $\ZG$-submodule of $\cM_2$. 
\end{lemma}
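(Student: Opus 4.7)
The plan is to bootstrap the hypothesis $P$---the formula for finitely generated submodules of finitely generated free modules---in two stages via Proposition~\ref{properties of mrk}, and then handle a general pair $\cM_1 \subseteq \cM_2$ by pulling back through a free cover of $\cM_2$.

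First I would upgrade $P$ to the statement: $\mrks(\cA|F) = \mrks(\cA)$ whenever $\cA$ is finitely generated and $F$ is an arbitrary free $\ZG$-module. Fixing a basis of $F$, the $\ZG$-spans of finite subsets of the basis that contain $\cA$ form a directed net $\{F_\alpha\}$ of finitely generated free submodules with union $F$; by $P$ each $\mrks(\cA|F_\alpha) = \mrks(\cA)$, and Proposition~\ref{properties of mrk}(4) yields $\mrks(\cA|F_\alpha) \searrow \mrks(\cA|F)$. Then, for arbitrary $\cN \subseteq F$, write $\cN = \bigcup_i \cA_i$ as the directed union of its finitely generated submodules. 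Applying Proposition~\ref{properties of mrk}(3) both to $\cN \subseteq F$ and to $\cN \subseteq \cN$ gives $\mrks(\cA_i|F) \nearrow \mrks(\cN|F)$ and $\mrks(\cA_i|\cN) \nearrow \mrks(\cN)$. Since $\mrks(\cA_i|\cN) \leq \mrks(\cA_i) = \mrks(\cA_i|F)$ by the upgrade, passing to the limit gives $\mrks(\cN) \leq \mrks(\cN|F)$, and the reverse is the tautological inequality $\mrks(\cN|F) \leq \mrks(\cN)$. Hence $\mrks(\cN|F) = \mrks(\cN)$ for \emph{every} submodule of a free module.

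For a general pair $\cM_1 \subseteq \cM_2$, choose a surjection $\pi \colon F \twoheadrightarrow \cM_2$ with $F$ free, set $K = \Ker \pi$, and let $\tilde{G} = \pi^{-1}(\cM_1) \supseteq K$, so $\tilde{G}/K \cong \cM_1$ and $F/\tilde{G} \cong \cM_2/\cM_1$. Invoking Theorem~\ref{addition formula for mrk} on the chains $K \subseteq \tilde{G} \subseteq F$ and $\cM_1 \subseteq \cM_2$ produces
\begin{align*}
\mrks(\cM_1|\cM_2) &= \mrks(\tilde{G}|F) - \mrks(K|F), \\
\mrks(\cM_1) &= \mrks(\tilde{G}) - \mrks(K|\tilde{G}).
\end{align*}
The previous step gives $\mrks(\tilde{G}|F) = \mrks(\tilde{G})$ and $\mrks(K|F) = \mrks(K)$. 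The monotonicity $\mrks(\cN|\cM) \leq \mrks(\cN|\cM')$ for $\cN \subseteq \cM' \subseteq \cM$---immediate from Theorem~\ref{addition formula for mrk} since the image of $\LG \otimes \cN$ in $\LG \otimes \cM$ is a quotient of its image in $\LG \otimes \cM'$---then forces the sandwich $\mrks(K) = \mrks(K|F) \leq \mrks(K|\tilde{G}) \leq \mrks(K)$, so $\mrks(K|\tilde{G}) = \mrks(K)$. Subtracting the two displayed lines yields $\mrks(\cM_1|\cM_2) = \mrks(\cM_1)$. The ``in particular'' clause follows, since the addition formula applied to $0 \to \Ker \varphi \to \cM \to \Ima \varphi \to 0$ shows that Juzvinski\u{\i}'s formula for mean rank is exactly the equality $\mrks(\cM_1|\cM_2) = \mrks(\cM_1)$ for all $\cM_1 \subseteq \cM_2$.

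The delicate point I expect is the choice of lift in the final step: a \emph{finitely generated} lift $\tilde{\cM}_1 \subseteq F$ of $\cM_1$ would force one to analyze $K \cap \tilde{\cM}_1$, which need not be finitely generated, and the bookkeeping does not close. Taking instead the full preimage $\tilde{G} = \pi^{-1}(\cM_1)$ lets both $\mrks(\tilde{G}|F) = \mrks(\tilde{G})$ and $\mrks(K|F) = \mrks(K)$ be read off from the upgraded form of $P$, after which the monotonicity sandwich disposes of the remaining term $\mrks(K|\tilde{G})$.
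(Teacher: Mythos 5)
Your first two steps are sound and essentially reproduce the continuity arguments the paper uses: Proposition \ref{properties of mrk}(4) upgrades the hypothesis to $\mrks(\cA|F)=\mrks(\cA)$ for $\cA$ finitely generated inside an arbitrary free module $F$, and Proposition \ref{properties of mrk}(3) then extends this to every submodule $\cN\subseteq F$. The gap is in the final step. For a general pair $\cM_1\subseteq\cM_2$ you pass to a free cover $\pi\colon F\twoheadrightarrow\cM_2$ and subtract the two identities $\mrks(\cM_1|\cM_2)=\mrks(\tilde G|F)-\mrks(K|F)$ and $\mrks(\cM_1)=\mrks(\tilde G)-\mrks(K|\tilde G)$. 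But when $\cM_2$ is not finitely generated, $F$ must have infinite rank, and then $\mrks(K|F)$ and $\mrks(\tilde G|F)$ are typically $+\infty$: for instance with $\cM_1=\cM_2=\bigoplus_{i\geq 1}\ZG/2\ZG$ one gets $\vrk(K|F)=\vrk(F)-\vrk(F/K)=\infty-0=\infty$. The correct additive form of your identities, $\mrks(\tilde G|F)=\mrks(K|F)+\mrks(\cM_1|\cM_2)$ (which, note, is not literally Theorem \ref{addition formula for mrk} but must be derived from $\mrks=\vrk$ and additivity of $\dim_{\LG}$ applied to the chain of images in $\LG\otimes F$), then reads $\infty=\infty+\mrks(\cM_1|\cM_2)$ and carries no information; cancelling $\mrks(K|F)$ against $\mrks(K|\tilde G)$ is an illegitimate $\infty-\infty$. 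Your closing paragraph identifies the wrong delicate point: the issue is not the choice of lift but the finiteness of the terms you cancel.

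The repair is exactly the reduction the paper performs before any free cover appears: first write $\cM_1$ as an increasing union of finitely generated submodules and use Proposition \ref{properties of mrk}(3) applied to $\cM_1\subseteq\cM_2$ and to $\cM_1\subseteq\cM_1$ to reduce to $\cM_1$ finitely generated; then write $\cM_2$ as an increasing union of finitely generated submodules containing $\cM_1$ and use Proposition \ref{properties of mrk}(4) to reduce to $\cM_2$ finitely generated. At that point the cover is $(\ZG)^n$, every mean rank in sight is at most $n$, and your subtraction (equivalently, the paper's computation with the presentation $(\ZG)^n/\cN$) is valid. With that reordering your argument closes; as written, the general case is not proved.
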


\begin{proof}
{\bf Case 1.} Both $\cM_1$ and $\cM_2$ are finitely generated.

Write $\cM_2$ as $\cM_2=(\ZG)^n/\cN$ and $\cM_1$ as $\cM_1=\cM_1'/\cN$ for some $n \in \bN$ and some $\ZG$-submodules $\cN \subseteq \cM_1' \subseteq (\ZG)^n$. Write $\cN$ as the union of an increasing net of finitely generated submodules $\{\cN_j\}_{j \in \cJ}$ of $\cN$. Note that for each $j$, by Proposition \ref{properties of mrk}, we have
$$\mrks(\cN_j)=\mrks(\cN_j|\cN_j) \geq \mrks(\cN_j|\cM_1') \geq \mrks(\cN_j|(\ZG)^n)=\mrks(\cN_j).$$
Thus $\mrks(\cN_j|(\ZG)^n)=\mrks(\cN_j|\cM_1')$ for all $j$. Moreover, by Proposition \ref{properties of mrk},
$$\mrks(\cN|(\ZG)^n)=\sup_{j \in \cJ} \mrks(\cN_j|(\ZG)^n)=\sup_{j \in \cJ} \mrks(\cN_j|\cM_1')=\mrks(\cN|\cM_1').$$
Then by Theorem \ref{addition formula for mrk} and Proposition \ref{properties of mrk}, we have
\begin{align*}
    &\mrks(\cM_1|\cM_2)\\
    &=\mrks((\ZG)^n/\cN) -\mrks((\ZG)^n/\cM_1')\\
    &=(n-\mrks(\cN|(\ZG)^n))-(n-\mrks(\cM_1'|(\ZG)^n))\\
    &=\mrks(\cM_1')-\mrks(\cN|\cM_1')\\
    &=\mrks(\cM_1'/\cN)=\mrks(\cM_1).
\end{align*}

{\bf Case 2.} $\cM_1$ is finitely generated.

Write $\cM_2$ as the union of an increasing net of finitely generated submodules $\{\cM_j'\}_{j \in \cJ}$ of $\cM_2$. By  Proposition \ref{properties of mrk} and the conclusion of Case 1, we have
$$\mrks(\cM_1|\cM_2)=\inf_{j \in \cJ} \mrks(\cM_1|\cM_j')=\inf_{j \in \cJ} \mrks(\cM_1)=\mrks(\cM_1).$$

Now we consider the general case. Write $\cM_1$ as the union of an increasing net of finitely generated submodules $\{\cM_j'\}_{j \in \cJ}$ of $\cM_1$. Applying Proposition \ref{properties of mrk} and the conclusion of Case 2, we have
$$\mrks(\cM_1|\cM_2)=\sup_{j \in \cJ} \mrks(\cM_j'|\cM_2)=\sup_{j \in \cJ} \mrks(\cM_j'|\cM_1)=\mrks(\cM_1).$$

Suppose that  $\Gamma$ satisfies Juzvinski\u{\i} formula for mean rank. Let $\cM_2$ be a finitely generated free $\ZG$-module and $\cM_1$ be finitely generated $\ZG$-submodule of $\cM_2$. Consider the  quotient map $\cM_2 \to \cM_2/\cM_1$. Then the conclusion follows immediately by Theorem \ref{addition formula for mrk}. The converse direction also follows immediately by Theorem \ref{addition formula for mrk}.
\end{proof}

\begin{remark} \label{L}
Let $\rL$ be a function as in \cite[Lemma 7.7]{LL15A} satisfying all the properties (i)-(v). Then the corresponding result in Lemma \ref{relativity} still holds without change of proof. In particular, the corresponding statement holds for von Neumann-L\"{u}ck rank.
\end{remark}

\begin{proof}[Proof of Corollary \ref{dimension-flat}]
The second statement follows from Theorem \ref{addition formula for mrk} and the first statement. Suppose $\Gamma$ satisfies L\"{u}ck's dimension-flatness, in particular, for any finitely presented $\ZG$-module $\cM$, we have 
$\beta_1^{(2)}(\cM)=0$. Write $\cM$ as $\cM =(\ZG)^n/(\ZG)^mf$ for some $f \in M_{m, n}(\ZG)$. Then a projective resolution of $\cM$ can be 
$$\cdots \stackrel{\partial_2}{\longrightarrow} (\ZG)^m \stackrel{\partial_1}{\longrightarrow} (\ZG)^n \to \cM \to 0,$$
where $\partial_1=R(f)$.
Then by Theorem \ref{theorem for correspondence}, we have
\begin{align*}
    &\vrk(\Ima \partial_1)-\vrk(\Ima \partial_1|(\ZG)^n)\\
    &=\vrk(\Coker \partial_2)-\vrk(\Ima \partial_1|(\ZG)^n)\\
    &=\beta_1^{(2)}(\cM)=0.
\end{align*}
By Remark \ref{L}, we have $\Gamma$ satisfies Juzvinski\u{\i} formula for von Neumann-L\"{u}ck rank.

For the ``if" part, by Theorem \ref{theorem for correspondence}, we first have $\beta_1^{(2)}(\cM)=0$ for any finitely presented $\ZG$-module $\cM$.  Since both $\dim_\LG(\cdot)$ and ${\rm tor_j^{\ZG}(\LG, \cdot)}$  commutes with the colimits \cite[Theorem 6.7]{Lueck02B} \cite[Proposition 10.99]{Rotman10B}, we have $\beta_1^{(2)}(\cM)=0$ for any $\ZG$-module $\cM$. 

 Let $0 \to \cN \to \cF \to \cM \to 0$ be an exact sequence of $\ZG$-modules such  that $\cF$ is free. Then a projective resolution of $\cN$ induces a projective resolution of $\cM$. So $\beta_2^{(2)}(\cM)=\beta_1^{(2)}(\cN)=0$. Inductively we have $\beta_j^{(2)}(\cM)=0$ for all $j \geq 1$  and $\ZG$-module $\cM$.

\end{proof}

The following proposition was implicitly proven in \cite[Conjecture 6.48]{Lueck02B}. For convenience, we give a proof. 

\begin{proposition} \label{subgroup}
Let $\Gamma$ be a discrete group (not necessarily sofic) and $H$ be a subgroup of $\Gamma$. Then for any $\bZ H$-module $\cM$, we have $\beta_j^{(2)}(\ZG \otimes_{\bZ H}\cM)=\beta_j^{(2)}(\cM)$. In particular, taking subgroups respects the property of L\"{u}ck's dimension-flatness.
\end{proposition}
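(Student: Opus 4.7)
The plan is to prove the induction formula by combining a base-change isomorphism for $\mathrm{Tor}$ with the dimension-preservation property of the inclusion $\cL H \hookrightarrow \LG$.

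First I would pick a projective resolution $P_\ast \to \cM$ of $\cM$ as a $\bZ H$-module. Since $\ZG$ is free, hence flat, as a $\bZ H$-module (a basis being any set of representatives for $H \backslash \Gamma$), the induced complex $\ZG \otimes_{\bZ H} P_\ast \to \ZG \otimes_{\bZ H} \cM$ is a projective $\ZG$-resolution of $\ZG \otimes_{\bZ H} \cM$. Associativity of tensor products then yields
$$\LG \otimes_{\ZG}\bigl(\ZG \otimes_{\bZ H} P_\ast\bigr) \;\cong\; \LG \otimes_{\bZ H} P_\ast \;\cong\; \LG \otimes_{\cL H}\bigl(\cL H \otimes_{\bZ H} P_\ast\bigr),$$
so $\beta_j^{(2)}(\ZG \otimes_{\bZ H} \cM) = \dim_{\LG} H_j\bigl(\LG \otimes_{\cL H}(\cL H \otimes_{\bZ H} P_\ast)\bigr)$.

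Second, I would invoke L\"uck's induction theorem \cite[Theorem 6.29]{Lueck02B}, which asserts that $\LG$ is flat over $\cL H$ and that induction preserves the von Neumann dimension, namely $\dim_{\LG}(\LG \otimes_{\cL H} V) = \dim_{\cL H}(V)$ for every $\cL H$-module $V$. Flatness lets homology commute with $\LG \otimes_{\cL H}(-)$, giving
$$H_j\bigl(\LG \otimes_{\cL H}(\cL H \otimes_{\bZ H} P_\ast)\bigr) \;\cong\; \LG \otimes_{\cL H} H_j\bigl(\cL H \otimes_{\bZ H} P_\ast\bigr) \;=\; \LG \otimes_{\cL H} \mathrm{Tor}_j^{\bZ H}(\cL H, \cM).$$
Taking $\dim_{\LG}$ of both sides and applying dimension-preservation produces
$$\beta_j^{(2)}(\ZG \otimes_{\bZ H} \cM) \;=\; \dim_{\cL H} \mathrm{Tor}_j^{\bZ H}(\cL H, \cM) \;=\; \beta_j^{(2)}(\cM),$$
which is the desired identity.

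For the ``in particular'' statement, suppose $\Gamma$ satisfies L\"uck's dimension-flatness over $\bZ$ and let $H \leq \Gamma$. For any $\bZ H$-module $\cM$ and $j \geq 1$, the just-proven induction formula gives $\beta_j^{(2)}(\cM) = \beta_j^{(2)}(\ZG \otimes_{\bZ H} \cM) = 0$, so $H$ inherits the dimension-flatness property. The main obstacle will be justifying honest flatness (as opposed to a weaker ``dimension-flatness'') of $\LG$ over $\cL H$ for an arbitrary, possibly non-amenable, subgroup $H$; but this is exactly the content of L\"uck's induction theorem and can be cited directly, so no new ingredient beyond the references already used in this paper is required.
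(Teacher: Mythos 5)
Your proposal is correct and follows essentially the same route as the paper's proof: induce a resolution along the flat extension $\bZ H \to \ZG$, rewrite $\LG \otimes_{\ZG}(\ZG \otimes_{\bZ H} P_\ast)$ as $\LG \otimes_{\cL H}(\cL H \otimes_{\bZ H} P_\ast)$, and apply L\"uck's induction theorem (flatness of $\LG$ over $\cL H$ plus dimension preservation). The only cosmetic differences are that the paper starts from a free rather than projective resolution and leaves the ``in particular'' clause implicit.
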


\begin{proof}
 Let $\cC_\ast \to \cM$ be a free resolution of $\cM$.  Since $\ZG$ is a flat $\bZ H$-module, we get a free resolution $\ZG \otimes_{\bZ H} \cC_\ast \to \ZG \otimes_{\bZ H} \cM$ of the $\ZG$-module $\ZG \otimes_{\bZ H} \cM$. Since the induction functor $\LG \otimes_{\cL H} \cdot$ is flat \cite[Theorem 6.29 (1)]{Lueck02B}, we have
$$\LG \otimes_{\cL H} H_j(\cL H \otimes_{\bZ H}\cC_\ast) \cong H_j(\LG \otimes_{\cL H}(\cL H \otimes_{\bZ H} \cC_\ast))\cong H_j(\LG \otimes_{\ZG}(\ZG \otimes_{\bZ H}\cC_\ast)).$$
Thus by \cite[Theorem 6.29 (2)]{Lueck02B}, we have
\begin{align*}
    &\beta_j^{(2)}(\ZG \otimes_{\bZ H}\cM)\\
    &=\dim_{\LG} H_j(\LG \otimes_{\ZG}(\ZG \otimes_{\bZ H} \cC_\ast))\\
    &= \dim_\LG (\LG \otimes_{\cL H} H_j(\cL H \otimes_{\bZ H} \cC_\ast))\\
    &=\dim_{\cL H} H_j(\cL H \otimes_{\bZ H}\cC_\ast)=\beta_j^{(2)}(\cM).
\end{align*}

\end{proof}

As a consequence of Corollary \ref{dimension-flat} and Proposition \ref{subgroup}, we have:
\begin{corollary} \label{subgroup failure}
If a subgroup $H$ of a sofic group $\Gamma$ violoates L\"{u}ck's dimension-flatness, then $\Gamma$ violoates Juzvinski\u{\i} formula for mean dimension. In particular, if $\beta_j^{(2)}(H) > 0$ for some $j\geq 1$, then $\Gamma$  violoates Juzvinski\u{\i} formula for mean rank  and mean dimension.
\end{corollary}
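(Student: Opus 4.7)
The plan is to chain together Proposition~\ref{subgroup}, the reductions from the proof of Corollary~\ref{dimension-flat}, and part~(3) of Theorem~\ref{theorem for correspondence}, the last of which translates nonvanishing of $\beta_1^{(2)}$ of a deleted projective resolution into an explicit failure of the Juzvinski\u{\i} formula for mean dimension.

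First, from the hypothesis I would pick a $\bZ H$-module $\cM_0$ and some $j \geq 1$ with $\beta_j^{(2)}(\cM_0) > 0$. Proposition~\ref{subgroup} lifts this to $\beta_j^{(2)}(\ZG \otimes_{\bZ H} \cM_0) > 0$, so $\Gamma$ itself violates L\"{u}ck's dimension-flatness. Next, I would perform the two reductions already used in the ``if'' direction of Corollary~\ref{dimension-flat}. First, shift the index down to $j=1$: for a short exact sequence $0 \to \cN \to \cF \to \cM \to 0$ with $\cF$ free, dimension-shifting in $\mathrm{Tor}$ gives $\beta_k^{(2)}(\cM) = \beta_{k-1}^{(2)}(\cN)$ for $k \geq 2$, and iterating produces a $\ZG$-module $\cM_1$ with $\beta_1^{(2)}(\cM_1) > 0$. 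Second, since both $\mathrm{Tor}_1^{\ZG}(\LG, \cdot)$ and $\dim_\LG$ commute with filtered colimits and every module is a filtered colimit of finitely presented ones, some finitely presented module $\cM_2$ in such a colimit system for $\cM_1$ still satisfies $\beta_1^{(2)}(\cM_2) > 0$; being finitely presented over the countable ring $\ZG$, $\cM_2$ is automatically countable.

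Finally, I would pick a finitely generated free resolution of $\cM_2$ whose deleted version $\cC_\ast$ has $C_0, C_1$ finitely generated free $\ZG$-modules and is exact at $C_1$. Part~(3) of Theorem~\ref{theorem for correspondence} applied at $j=1$ then asserts that $\widehat{\partial_1} \colon \widehat{C_0} \to \widehat{C_1}$ fails the Juzvinski\u{\i} formula for mean dimension, which is the desired witness. The ``In particular'' clause follows by specializing $\cM_0$ to the trivial $\bZ H$-module $\bZ$, so that $\beta_j^{(2)}(\cM_0) = \beta_j^{(2)}(H) > 0$; the simultaneous failure of Juzvinski\u{\i} for mean rank is then immediate from Corollary~\ref{dimension-flat}. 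The only subtle point is ensuring that the two reductions preserve positivity of $\beta_1^{(2)}$, which is the standard flatness and colimit-compatibility of $\mathrm{Tor}$ and $\dim_\LG$ already invoked in the proof of Corollary~\ref{dimension-flat}, so no further dynamical or analytic input is needed.
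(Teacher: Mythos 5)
Your proposal is correct and follows essentially the same route as the paper, which simply derives the corollary by combining Proposition~\ref{subgroup} (to transfer the violation of dimension-flatness from $H$ to $\Gamma$) with Corollary~\ref{dimension-flat}. Your additional steps (dimension-shifting to $j=1$, passing to a finitely presented countable module via colimits, and invoking Theorem~\ref{theorem for correspondence}(3) to exhibit the explicit map $\widehat{\partial_1}$ witnessing the failure) are a sound unfolding of the proof of Corollary~\ref{dimension-flat} rather than a genuinely different argument.
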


We refer the reader to \cite[Section 5]{PT11} for some discussions on $L^2$-Betti numbers of subgroups. L\"{u}ck's dimension-flatness for amenable groups \cite[Theorem 6.37]{Lueck02B} can be interpreted in terms of relative sofic mean rank.

\begin{corollary} \label{amenable case}
Amenable groups satisfy L\"{u}ck's dimension-flatness.
\end{corollary}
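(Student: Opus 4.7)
The plan is to derive Corollary \ref{amenable case} as a short consequence of Corollary \ref{dimension-flat} together with the length-function property of the sofic mean rank (equivalently, the von Neumann-L\"uck rank) on $\ZG$-modules over amenable groups. Since every amenable group is sofic, Corollary \ref{dimension-flat} reduces the task to verifying either the Juzvinski\u{\i} formula for von Neumann-L\"uck rank or the one for mean rank, for an arbitrary amenable $\Gamma$. I will go through $\vrk$, which keeps the argument independent of the soficity machinery beyond what is already invoked in Corollary \ref{dimension-flat}.

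The key input, already cited in the introduction, is that for amenable $\Gamma$ the sofic mean rank $\mrks$ is a length function on $\ZG$-modules (see \cite[Section 5.2]{LL13} and \cite[Theorem 3.3.4]{Liang16D}), and by Theorem \ref{addition formula for mrk} this function coincides with $\vrk$ on $\ZG$-modules. Unpacking \cite[Definition 2.1]{LL15A}, being a length function means precisely being additive on short exact sequences. Given any $\ZG$-module homomorphism $\varphi:\cM\to\cN$, applying this additivity to
\[
0 \to \Ker \varphi \to \cM \to \Ima \varphi \to 0
\]
yields $\vrk(\cM)=\vrk(\Ker \varphi)+\vrk(\Ima \varphi)$, which is exactly the Juzvinski\u{\i} formula for $\vrk$ as defined just before Corollary \ref{dimension-flat}. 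Feeding this back into Corollary \ref{dimension-flat} gives L\"uck's dimension-flatness over $\bZ$ for $\Gamma$.

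I do not expect a serious obstacle here: the proof is essentially a two-line deduction from previously assembled results. The only subtlety worth flagging is the appearance of possibly infinite values of $\vrk$, for which a naive subtraction $\vrk(\Ima\varphi)=\vrk(\cM)-\vrk(\Ker\varphi)$ would be meaningless; this is bypassed by reading the length-function additivity directly as an identity in $[0,\infty]$, or alternatively by invoking Lemma \ref{relativity} together with Remark \ref{L} to reduce the Juzvinski\u{\i} formula to the case where $\cM$ is a finitely generated free $\ZG$-module and $\Ker\varphi$ is finitely generated, in which all ranks in sight are bounded by the $\bZ$-rank of the free module and the identity becomes unambiguous.
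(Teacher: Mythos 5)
Your proof is correct and follows essentially the same route as the paper: both reduce the statement via Corollary \ref{dimension-flat} to verifying a Juzvinski\u{\i} formula, and then cite a known additivity result for amenable groups. The only (inessential) difference is that the paper verifies the formula for mean rank by combining \cite[Theorem 5.1]{LL15A} (relative mean rank equals absolute mean rank for amenable groups) with Theorem \ref{addition formula for mrk}, whereas you invoke the length-function property of $\vrk$ directly; these are equivalent inputs.
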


\begin{proof}
By \cite[Theorem 5.1]{LL15A}, we know $\mrks(\cM_1|\cM_2)=\mrks(\cM_1)$ holds for any $\ZG$-modules $\cM_1 \subseteq \cM_2$. By Theorem \ref{addition formula for mrk}, $\Gamma$ satisfies Juzvinski\u{\i} formula for mean rank. Thus by Corollary \ref{dimension-flat}, $\Gamma$ satisfies L\"{u}ck's dimension-flatness.
\end{proof}

\begin{corollary}
Let $\Gamma$ be a residually finite group and $\{\Gamma_i\}_i$ be a  sequence of finite-indexed decreasing normal subgroups of $\Gamma$ with the intersection $\{e_\Gamma\}$. Let $\cC_\ast$ be a chain complex of finitely generated free $\ZG$-modules. Then
$$\mrk_j(\cC_\ast)=\lim_{i\to \infty} \frac{\rk(H_j(\Gamma_i \setminus \cC_\ast))}{|\Gamma/\Gamma_i|}.$$

\end{corollary}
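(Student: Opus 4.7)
The plan is to show that both sides of the claimed equality compute $\beta_j^{(2)}(\cC_\ast)$, combining Theorem \ref{theorem for correspondence} with L\"{u}ck's approximation theorem. Since residually finite groups are sofic, I fix as sofic approximation $\Sigma=\{\sigma_i\}$ the canonical one coming from the left coset actions $\sigma_i\colon \Gamma \to \Sym(\Gamma/\Gamma_i)$, with $d_i=|\Gamma/\Gamma_i|$. As every $C_j$ is finitely generated free we have $\vrk(C_j)<\infty$, so Theorem \ref{theorem for correspondence}(2) applies and gives
$$\mrk_j(\cC_\ast) \;=\; \beta_j^{(2)}(\cC_\ast).$$

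The next step is to rewrite the limit on the right-hand side in terms of a normalized sequence of $\bQ$-Betti numbers. For each $i$, the quotient chain complex $\Gamma_i \setminus \cC_\ast$ coincides with $\bZ[\Gamma/\Gamma_i]\otimes_{\ZG}\cC_\ast$, a chain complex of finitely generated free abelian groups ($\Gamma_i \setminus (\ZG)^{n_j}$ has rank $n_j d_i$ when $C_j\cong (\ZG)^{n_j}$). Since $\bQ$ is flat over $\bZ$,
$$\rk\, H_j(\Gamma_i \setminus \cC_\ast) \;=\; \dim_\bQ H_j\bigl(\bQ[\Gamma/\Gamma_i]\otimes_{\ZG}\cC_\ast\bigr).$$

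I then invoke L\"{u}ck's approximation theorem (see \cite[Chapter 13]{Lueck02B}): for a residually finite group $\Gamma$, a nested chain $\{\Gamma_i\}$ of finite-index normal subgroups with $\bigcap_i \Gamma_i=\{e_\Gamma\}$, and a chain complex $\cC_\ast$ of finitely generated free $\ZG$-modules,
$$\beta_j^{(2)}(\cC_\ast) \;=\; \lim_{i\to\infty}\frac{\dim_\bQ H_j(\bQ[\Gamma/\Gamma_i]\otimes_{\ZG}\cC_\ast)}{|\Gamma/\Gamma_i|}.$$
Chaining the three displays yields the corollary.

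The main point requiring care is the applicability of L\"{u}ck's theorem, which is classically stated for chain complexes concentrated in finitely many degrees. By Theorem \ref{theorem for correspondence}(1), however, $\beta_j^{(2)}(\cC_\ast)$ depends only on the local data $\partial_{j+1}$ and $\partial_j$, and $H_j(\Gamma_i \setminus \cC_\ast)$ is likewise local in $j$. One may therefore truncate $\cC_\ast$ to the three-term segment $C_{j+1}\to C_j\to C_{j-1}$ without altering either side, reducing to the classical bounded setting where L\"{u}ck's theorem applies verbatim.
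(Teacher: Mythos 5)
Your argument is essentially the paper's own proof: both sides are identified with $\beta_j^{(2)}(\cC_\ast)$ by combining Theorem \ref{theorem for correspondence} with L\"{u}ck's approximation theorem applied to the finite quotients $\Gamma/\Gamma_i$. You spell out the details the paper delegates to the ``similar argument as in \cite[Lemma 13.4]{Lueck02B}'' (the identification $\rk\,H_j(\Gamma_i\setminus\cC_\ast)=\dim_\bQ H_j(\bQ[\Gamma/\Gamma_i]\otimes_\ZG\cC_\ast)$ and the truncation to a bounded complex), which is a welcome but not substantively different elaboration.
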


\begin{proof}
Since residually finite groups satisfy L\"{u}ck's approximation formula for $L^2$-Betti numbers \cite[Theorem 0.1]{Lueck94}, by the similar argument as in \cite[Lemma 13.4]{Lueck02B}, we have
$$\beta_j^{(2)}(\cC_\ast)=\lim_{i\to \infty} \frac{\rk(H_j(\Gamma_i \setminus \cC_\ast))}{|\Gamma/\Gamma_i|}.$$
Then the conclusion follows from Theorem \ref{theorem for correspondence}.
\end{proof}


\end{document}